\numberwithin{equation}{section}
\newtheorem{theorem}{Theorem}[section]
\newtheorem{proposition}[theorem]{Proposition}
\newtheorem{lemma}[theorem]{Lemma}
\newtheorem*{remark*}{Remark}
\theoremstyle{definition}
\newtheorem{definition}[theorem]{Definition}
\newtheorem{remark}[theorem]{Remark}
\newcommand{\vpi}{\varphi}
\newcommand{\mfo}{\mathfrak{o}}
\newcommand{\M}{\mathrm{M}}
\newcommand{\End}{\mathrm{End}}
\newcommand{\cL}{\mathcal{L}}
\newcommand{\ord}{\mathrm{ord}}
\newcommand*{\Rom}[1]{\expandafter\@slowromancap\romannumeral #1@}
\begin{document}

\title[Orbital integrals  
for the spherical Hecke algebra of $\mathrm{GL}_3$]
{An explicit formula for the orbital integrals  
on the spherical Hecke algebra of $\mathrm{GL}_3$}

\keywords{}

\subjclass[2020]{MSC11F72, 11S80}

\author[Sungmun Cho]{Sungmun Cho}
\author[Yuchan Lee]{Yuchan Lee}
\thanks{The authors are supported by  Samsung Science and Technology Foundation under Project Number SSTF-BA2001-04.}

\address{Sungmun Cho \\  Department of Mathematics, POSTECH, 77, Cheongam-ro, Nam-gu, Pohang-si, Gyeongsangbuk-do, 37673, KOREA}

\email{sungmuncho12@gmail.com}

\address{Yuchan Lee \\  Department of Mathematics, POSTECH, 77, Cheongam-ro, Nam-gu, Pohang-si, Gyeongsangbuk-do, 37673, KOREA}

\email{yuchanlee329@gmail.com}

\maketitle

\begin{abstract}
We provide the explicit formula for orbital integrals associated with elliptic regular semisimple elements in $\mathrm{GL}_{n}(F)\cap \mathrm{M}_n(\mfo)$  and associated with  arbitrary elements of the spherical Hecke algebra of $\mathrm{GL}_{n}(F)$ when $n=2, 3$, using results of  \cite{CKL}. Here $F$ is a non-Archimedean local field of any characteristic with $\mfo$ its ring of integers.
\end{abstract}

\tableofcontents

\section{Introduction}


Let $F$ be a non-Archimedean local field of any characteristic and let $\mfo$ be its ring of integers with $\kappa$ its residue field and with $\pi$  a uniformizer. The spherical Hecke algebra $\mathcal{H}$ of $\mathrm{GL}_n(F)$ is the space of complex-valued $\mathrm{GL}_n(\mfo)$-biinvariant continuous functions of compact support on $\mathrm{GL}_n(F)$. 
The spherical Hecke algebra $\mathcal{H}$ admits a $\mathbb{C}$-linear basis which consists of the characteristic function on $D_{(k_1,\cdots,k_n)}$, denoted by $\mathbbm{1}_{D_{(k_1,\cdots,k_n)}}$, where $k_i\in \mathbb{Z}$ with $k_1\leq \cdots \leq k_n$ and where 
\[
D_{(k_1,\cdots,k_n)}=\mathrm{GL}_n(\mfo)\cdot 
\begin{pmatrix}
    \pi^{k_1}&&\\
    &\ddots&\\
    &&\pi^{k_n}
\end{pmatrix}
\cdot \mathrm{GL}_n(\mfo).
\]

For a regular and semisimple element $\gamma \in \mathrm{GL}_{n}(F)\cap \mathrm{M}_n(\mfo)$ and for an element $\mathbbm{1}_{D_{(k_1,\cdots,k_n)}}\in \mathcal{H}$, 
 the orbital integral, denoted by $\mathcal{SO}_{\gamma}(\mathbbm{1}_{D_{(k_1,\cdots,k_n)}})$,  is defined to be an integration of $\mathbbm{1}_{D_{(k_1,\cdots,k_n)}}$ on the conjugacy class of $\gamma$ with respect to a certain Haar masure (cf. Definition \ref{def:orbitalint}). 
In this paper, we propose the closed formula for $\mathcal{SO}_{\gamma}(\mathbbm{1}_{D_{(k_1,\cdots,k_n)}})$ when $\gamma$ is elliptic and when $n=2,3$, based on  results of \cite{CKL}.

\subsection{Background}

The explicit formula for  $\mathcal{SO}_{\gamma}(\mathbbm{1}_{D_{(k_1,\cdots,k_n)}})$ is known only for  $\mathrm{GL}_2$ by Kottwitz in \cite{Kot05}.
His method is to  interpret $\mathcal{SO}_{\gamma}(\mathbbm{1}_{D_{(k_1, k_2)}})$ as a counting problem on  the Bruhat-Tits building of $\mathrm{SL}_2$.
This method is hardly applicable for $\mathrm{GL}_n$ with $n>2$ since  the structure of the  Bruhat-Tits building of $\mathrm{SL}_n$ with $n>2$ is complicated. 
On the other hand,  Kottwitz in \cite{Kot80} described $\mathcal{SO}_{\gamma}(\mathbbm{1}_{D_{(k_1,k_2,k_3)}})$ for $\mathrm{GL}_3$  as  linear combinations of more basic linear functionals, $L_1^m,L_2^m$ and $L_3^m$, on the spherical Hecke algebra $\mathcal{H}$, using the Bruhat-Tits building of $\mathrm{SL}_3$.

When  the characteristic of $F$ is greater than $5$, Chen in \cite{CHEN} obtained the formula for  orbital integrals for an elliptic regular semisimple element $\gamma$ in $\mathrm{GL}_3(F)\cap  \mathrm{M}_3(\mfo)$ and for $\mathbbm{1}_{\mathrm{M}_3(\mfo)}$, where $\mathbbm{1}_{\mathrm{M}_3(\mfo)}$ is the characteristic function on $\mathrm{M}_3(\mfo)$.
His method is to count the number of rational points on the truncated affine Springer fibers. 


Recently in \cite{CKL}, the authors and Kang proposed another method to investigate  orbital integrals using smoothening of a certain scheme over $\mfo$, and provided
the closed formula for $\mathcal{SO}_{\gamma}(\mathbbm{1}_{M_3(\mfo)})$
where $\gamma$ is a regular semisimple element in $\mathrm{GL}_3(F)\cap \mathrm{M}_3(\mfo)$  (see Proposition \ref{thm_for_M_3}).
One of key steps in this method is to use a stratification of the conjugacy class of $\gamma$. 
Since this is also used in our argument, 
we  describe it precisely below.

Let us identify $\mathrm{M}_3(\mfo)$ with $\mathrm{End}(L)$ for a free $\mfo$-module $L$ of rank $3$.
For a lattice $M\subset L\otimes_{\mfo}F$ of rank $3$, 
the type of $M$ is defined to 
    be $(k_1,k_2,k_3)$ where $k_i\in \mathbb{Z}$ with $ k_1\leq k_2\leq k_3$, such that $M\subset \pi^{k_1} L$ satisfying
    \[
    (\pi^{k_1} L)/M\cong \mfo/\pi^{k_1-k_1}\mfo\oplus\mfo/\pi^{k_2-k_1}\mfo \oplus \mfo/\pi^{k_3-k_1}\mfo.
    \]

Let  $G_{\gamma}(F):=\{ g^{-1}\gamma g\in \mathrm{M}_3(F) \mid g\in \mathrm{GL}_3(F)\}$ and let $d=\mathrm{ord}(\mathrm{det}(\gamma))$.
Then we have the following stratification for $G_\gamma (F)\cap \mathrm{M}_3(\mfo)$ (cf. \cite[Proposition 3.7]{CKL} or Proposition \ref{prop_stratification}):
\begin{equation}\label{intro_stratification}
G_\gamma(F)\cap \mathrm{M}_3(\mfo)
=\bigsqcup_{\substack{(k_1, k_2, k_3),  \\ \sum k_i=d,\ k_1\geq 0 }}
\Bigg(\bigsqcup_{\substack{M:type(M)= \\ (k_1, k_2, k_3)}} O_{\gamma, \cL(L,M)}\Bigg),
\end{equation}
where
$\left\{
\begin{array}{l}
     \cL(L,M)(\mfo)=\{f\in \mathrm{End}_\mfo(L) \mid f:L \rightarrow M \textit{ is surjective}\}; \\
     \mathcal{O}_{\gamma,\cL(L,M)}=G_\gamma(F)\cap \cL(L,M)(\mfo).
\end{array}
\right.
$

\subsection{Main results}
Our main result is the closed formula for $\mathcal{SO}_{\gamma}(\mathbbm{1}_{D_{(k_1,\cdots,k_n)}})$ when $\gamma$ is elliptic, which is stated in Theorem \ref{mainthm48} when $n=2$ and in Theorem \ref{mainthmgl3} when $n=3$.
Since Theorem \ref{mainthm48} with $n=2$ is rather simple and  is not new (the formula was known by Kottwitz), we will  explain outline of our proof when $n=3$.

The starting point of our proof is the following identity proved in Proposition \ref{prop:hecke_type}:
\[ \mathcal{SO}_\gamma(\mathbbm{1}_{D_{(k_1,k_2,k_3)}})=\textit{the volume of $\bigsqcup\limits_{\substack{M:type(M)= \\ (k_1, k_2, k_3)}} O_{\gamma, \cL(L,M)}$}.\]
By the reduction formula stated in \cite[Proposition 4.11]{CKL} (see Proposition \ref{prop_red}), we can  suppose that $k_1=0$.
If we exclude the case that $3|d$ and $(k_1,k_2,k_3)=(0,\frac{d}{3},\frac{2d}{3})$, then the  explicit formula for the volume of $\bigsqcup\limits_{\substack{M:type(M)= \\ (0, k_2, k_3)}} O_{\gamma, \cL(L,M)}$ is provided in \cite[Theorem 6.1]{CKL} which directly yields  the formula for $\mathcal{SO}_{\gamma}(\mathbbm{1}_{D_{(0,k_2,k_3)}})$ (see Proposition \ref{result_3}).
\cite[Theorem 6.1]{CKL} also gives the formula for   $\mathcal{SO}_{\gamma}(\mathbbm{1}_{D_{(0,\frac{d}{3},\frac{2d}{3})}})$ under the restriction that the reduction of $\chi_\gamma(\pi^{\frac{d}{3}}x)/\pi^d$ modulo $\pi$ is irreducible over $\kappa$, where $\chi_{\gamma}(x)$ is the characteristic polynomial of $\gamma$ and $d=\mathrm{ord}(\mathrm{det}(\gamma))$.


To overcome this restriction when $3| d$, 
we use \cite[Proposition 5.1]{CKL} which shows the existence of a constant $a\in \mfo$ such that if $3|d_a$ with $d_a=\mathrm{ord}(\mathrm{det}(\gamma-a\cdot I_3))$ then the characteristic polynomial of $\gamma -a\cdot I_3$ satisfies the desired condition that $\chi_{\gamma -a\cdot I_3}(\pi^{\frac{d_a}{3}}x)/\pi^{d_a}$ modulo $\pi$ is irreducible over $\kappa$ (see Lemma \ref{prop_specialconst}.(1)). 
Here $I_3$ denotes the identity matrix in $\mathrm{M}_3(\mfo)$. 
Then \cite[Lemma 3.4]{CKL} yields the following   bijection  (see Lemma \ref{constantlemma}):
$$G_\gamma(F)\cap \mathrm{M}_3(\mfo)\longrightarrow  G_{\gamma-a\cdot I_3}(F)\cap \mathrm{M}_3(\mfo), ~~~~  
  \ X\mapsto X-a\cdot I_3.$$ 
We prove that this bijection induces the following identity in the paragraph following Equation (\ref{eq:special_type1}): 
\[
\sum_{j=1}^{\lfloor d/2 \rfloor}\mathcal{SO}_{\gamma}(\mathbbm{1}_{D_{(0,j,d-j)}})=
    \sum_{i=1}^{\lfloor d_a/2 \rfloor}\mathcal{SO}_{\gamma-a\cdot I_3}(\mathbbm{1}_{D_{(0,i,d_a -i)}}).
\]

On the left hand side of the above identity, we obtain the explicit formula for each summand except for $\mathcal{SO}_{\gamma}(\mathbbm{1}_{D_{(0,\frac{d}{3},\frac{2d}{3})}})$ by \cite[Theorem 6.1]{CKL}. 
On the other hand, each summand of the right hand side, especially $\mathcal{SO}_{\gamma}(\mathbbm{1}_{D_{(0,\frac{d_a}{3},\frac{2d_a}{3})}})$ if $3|d_a$ by the characterization of $a\in \mfo$,  is explicitly obtained by \cite[Theorem 6.1]{CKL} as well.  
Comparing these two sides, we finally 
 obtain the formula for $\mathcal{SO}_{\gamma}(\mathbbm{1}_{D_{(0,\frac{d}{3},\frac{2d}{3})}})$.

\begin{remark}
There are  two Haar measures with respect to which an orbital integral is defined; the geometric measure and the quotient measure.  These two measures are described in Section \ref{subsection:geometric} and Section \ref{classical_measure} respectively. The  difference between them  is described in \cite[Proposition 3.29]{FLN} (see Proposition \ref{proptrans}). 

In this manuscript, we work with the geometric measure.
However, 
we translate   Theorem \ref{mainthm48} and Theorem \ref{mainthmgl3}   in terms of the quotient measure using Proposition \ref{proptrans} in  Theorem \ref{result_2_dmu} and Theorem \ref{mainthmgl3q} respectively. 
\end{remark}



\textbf{Organizations.}
The structure of the paper is as follows. After fixing notations in Section \ref{sectionnss}, we 
explain our measures and orbital integrals explicitly  in Section \ref{sec3}. 
In Section \ref{sec4}, after collecting  a few results from \cite{CKL}, we provide the formula for $\mathcal{SO}_{\gamma}(\mathbbm{1}_{D_{(k_1,k_2)}})$ with $n=2$ in Section \ref{subsec4.1} and the formula for $\mathcal{SO}_{\gamma}(\mathbbm{1}_{D_{(k_1,k_2,k_3)}})$ with $n=3$ in Section \ref{subsec4.2}. 
\\

\textbf{Acknowledgments.}
We would like to thank Masao Tsuzuki to suggest this problem and to explain applications. 
We appreciate   Taeyeoup Kang and Sandeep Varma for  helpful discussions.

\section{Notations}\label{sectionnss}
This section is taken from \cite[Notations]{CKL}.
\begin{itemize}
\item Let $F$ be a  non-Archimedean local field  of any characteristic with $\mathfrak{o}_F$  its ring of integers and $\kappa$  its residue field.
Let $\pi$ be a uniformizer in $\mathfrak{o}_F$.
Let $q$ be the cardinality of the finite field $\kappa$.
If there is no confusion, we sometimes use $\mathfrak{o}$ to stand for $\mathfrak{o}_F$. \\
More generally, for a finite field extension $F'$ of $F$, the ring of integers in $F'$ is denoted by $\mfo_{F'}$ and the residue field of $\mfo_{F'}$ is denoted by $\kappa_{F'}$.

\item For an element $x\in F$, the exponential order of $x$ with respect to the maximal ideal in $\mathfrak{o}$ is written by $\mathrm{ord}(x)$.

\item For an element $x\in F$, the value of $x$ is $|x|_F:=q^{-\mathrm{ord}(x)}$.
If there is no confusion, then we sometimes omit $F$ so that the value of $x$ is written as $|x|$.

\item Let $\mathrm{GL}_{n, A}$ be  the general linear group scheme defined over $A$ 
  and let $\mathbb{A}^n_A$ be the affine space of dimension $n$ defined over $A$,
   where $A$ is a commutative $\mfo$-algebra. 
If there is no confusion then we sometimes omit $A$ in the subscript to express schemes over $\mfo$. 
Thus $\mathrm{GL}_{n}$  stands for $\mathrm{GL}_{n, \mathfrak{o}}$. 
Similarly $\mathrm{M}_n$ is the scheme over $\mfo$ representing the set of $n \times n$ matrices.



\item For $a\in R$ or $f(x)\in R[x]$ with a flat $\mfo$-algebra $R$, $\bar{a}\in R\otimes \kappa$ or $\bar{f}(x)\in R\otimes \kappa[x]$ is the reduction of $a$ or $f(x)$ modulo $\pi$, respectively.


\item
For $\gamma \in \mathrm{GL}_{n}(F)$,
 let  $\chi_{\gamma}(x)\in F[x]$  be its characteristic polynomial.
We always write $$\chi_{\gamma}(x)=x^n+c_1x^{n-1}+\cdots + c_{n-1}x+c_n ~~~~ \textit{with $c_i\in F$}. $$

\item Let $\Delta_{\gamma}$ be the discriminant of $\chi_{\gamma}(x)$.
 
\item By saying $\gamma\in \mathrm{GL}_{n}(F)$ regular, we mean that the identity component of the centralizer of $\gamma$ in $\mathrm{GL}_{n, F}$ is a maximal torus.
 In particular,  $\gamma$ of being  regular and semisimple  is equivalent that $\chi_{\gamma}(x)$ has distinct roots in the algebraic closure of $F$,
equivalently  $\Delta_{\gamma}\neq 0$ (cf. \cite[Section 3.1]{Gor22} or \cite[Sections 6-7]{Gro05}).


 \item Let $\mathcal{H}$ be the spherical Hecke algebra of $\mathrm{GL}_n(F)$, which is defined to be the space of complex-valued $\mathrm{GL}_n(\mfo)$-biinvariant  continuous functions of compact support on $\mathrm{GL}_n(F)$. 

\item Let $D_{(k_1,\cdots,k_n)}:=\mathrm{GL}_n(\mfo)\cdot\mathrm{diag}(\pi^{k_1}, \cdots, \pi^{k_n})\cdot \mathrm{GL}_n(\mfo)$, where $k_i\in \mathbb{Z}$ with $k_1\leq \cdots \leq k_n$.
Here $\mathrm{diag}(\pi^{k_1}, \cdots, \pi^{k_n})$ is the diagonal matrix of size $n$ with $\pi^{k_i}$ as the $i$-th diagonal entry. 
Then Cartan decomposition yields that $\mathbbm{1}_{D_{(k_1,\cdots,k_n)}}$, which is the characteristic function on $D_{(k_1,\cdots,k_n)}$, forms a basis for $\mathcal{H}$ as a $\mathbb{C}$-vector space.

\item For a rational number $a\in \mathbb{Q}$, 
$\lfloor a\rfloor$ is the largest integer which is less than or equal to $a$.
and  $\lceil a \rceil$ is the smallest integer which is greater than or equal to $a$.

\end{itemize}

From now on until the end of this paper, we suppose that $\gamma $ is a regular and semisimple element in $\mathrm{GL}_{n}(F)\cap \mathrm{M}_n(\mfo)$. 
Here  $\mathrm{GL}_{n}$ is defined over $\mfo$ and thus  is an open subscheme of $\mathrm{M}_{n}$ over $\mfo$. The intersection  $\mathrm{GL}_{n}(F)\cap \mathrm{M}_n(\mfo)$ is taken inside $\mathrm{M}_n(F)$.
Note that $\chi_{\gamma}[x]\in \mfo[x]$.

\section{Definition of orbital integrals}\label{sec3}
In this section, we define the orbital integral for a regular and semisimple element $\gamma \in \mathrm{GL}_{n}(F)\cap \mathrm{M}_n(\mfo)$ and for an element $\mathbbm{1}_{D_{(k_1,\cdots,k_n)}}$ in the  spherical Hecke algebra.

\subsection{Measures}
We will first introduce the geometric measure following \cite{FLN} and then the quotient measure following \cite{Yun13} which is often used in the literature. 
The comparison between them will also be explained.
Our arguments follow \cite[Section 2.1]{CKL}.

\subsubsection{Geometric measure}\label{subsection:geometric}
Let $\omega_{\mathrm{M}_n, \mfo}$ and $\omega_{\mathbb{A}_\mfo^n}$ be nonzero, translation-invariant forms on $\mathrm{M}_{n,F}$ and $\mathbb{A}_F^n$, respectively, with normalizations
\[
\int_{\mathrm{M}_{n, \mathfrak{o}}(\mathfrak{o})}|\omega_{\mathrm{M}_{n, \mathfrak{o}}}|=1 \mathrm{~and~}  \int_{\mathbb{A}^n_{\mathfrak{o}}(\mathfrak{o})}|\omega_{\mathbb{A}^n_{\mathfrak{o}}}|=1.
\]

Define a map 
\[
\rho_n : \mathrm{M}_{n,F} \longrightarrow \mathbb{A}_F^n, ~~~ \gamma\mapsto 
\textit{coefficients of $\chi_{\gamma}(x)$}.
\]
That is,  $\rho_n(\gamma)=(c_{1}, \cdots, c_n)$ for $\chi_{\gamma}(x)=x^n+c_1x^{n-1}+\cdots + c_{n-1}x+c_n$ with $c_i\in F$.
The morphism $\rho_n$ is then representable as a morphism of schemes over $F$. 
Let $\M_{n,F}^{\ast}$ be the smooth locus of $\rho_n$. 
 It is non-empty and a nonsingular variety since the smooth locus of a morphism  is open (and non-empty for $\rho_n$). 

We define $G_{\gamma}$ to be $\rho_n^{-1}(\chi_{\gamma})$.
Since $\rho_n|_{\M_{n,F}^{\ast}}$ is smooth  and  $G_{\gamma}$ is a subvariety of  $\M_{n,F}^{\ast}$, 
$G_{\gamma}$ is also smooth over $F$ since smoothness is stable under base change.

\begin{definition}{\cite[Definition 2.1]{CKL}}\label{diff1}
We will define a differential  $\omega_{\chi{\gamma}}^{\mathrm{ld}}$ on $G_{\gamma}$
associated to $\omega_{\M_{n, \mathfrak{o}}}$ and $\omega_{\mathbb{A}^n_{\mathfrak{o}}}$.
Smoothness of the morphism $\rho_n : \M_{n,F}^{\ast} \rightarrow \mathbb{A}_F^n$ induces the following short exact sequence of locally free sheaves on $\M_{n,F}^{\ast}$ (cf.  \cite[Proposition II.5]{BRL}):
\begin{equation*}\label{eqshort}
0\rightarrow \rho_n^{\ast}\Omega_{\mathbb{A}_F^n\slash F}\rightarrow \Omega_{\M_{n,F}^{\ast}\slash F} \rightarrow \Omega_{\M_{n,F}^{\ast}\slash \mathbb{A}_F^n} \rightarrow 0.
\end{equation*}
This gives rise to an isomorphism
\begin{equation*}\label{eqtop}
\rho_n^{\ast}\left(\bigwedge\limits^{\mathrm{top}}\Omega_{\mathbb{A}_F^n\slash F}\right)\otimes
\bigwedge\limits^{\mathrm{top}}\Omega_{\M_{n,F}^{\ast}\slash \mathbb{A}_F^n}\simeq 
\bigwedge\limits^{\mathrm{top}}\Omega_{\M_{n,F}^{\ast}\slash F}.
\end{equation*}

Let $\omega_{\chi_{\gamma}}\in \bigwedge\limits^{\mathrm{top}}\Omega_{\M_{n,F}^{\ast}\slash \mathbb{A}_F^n}(\M_{n,F}^{\ast})$ be such that 
$\rho_n^{\ast}\omega_{\mathbb{A}^n_{\mathfrak{o}}}\otimes \omega_{\chi_{\gamma}}=\omega_{\M_{n, \mathfrak{o}}}|_{\M_{n,F}^{\ast}}$ and denote by $\omega_{\chi_{\gamma}}^{\mathrm{ld}}$  the restriction of $\omega_{\chi_{\gamma}}$ to $G_{\gamma}$.
We sometimes write $\omega_{\chi_{\gamma}}^{\mathrm{ld}}=\omega_{\M_{n, \mathfrak{o}}}/\rho_n^{\ast}\omega_{\mathbb{A}^n_{\mathfrak{o}}}$.
\end{definition}

As in \cite[Definition 2.2]{CKL}, we define the orbital integral as follows:

\begin{definition}\label{def:orbitalint}
    The  orbital integral for $\gamma$ and for an element $\mathbbm{1}_{D_{(k_1,\cdots,k_n)}}$ in the  spherical Hecke algebra, denoted by $\mathcal{SO}_{\gamma}(\mathbbm{1}_{D_{(k_1,\cdots,k_n)}})$, is defined to be
    \[
    \mathcal{SO}_{\gamma}(\mathbbm{1}_{D_{(k_1,\cdots,k_n)}})=\int_{G_\gamma(F)\cap D_{(k_1,\cdots,k_n)}}|\omega_{\chi_\gamma^{\mathrm{ld}}}|.
    \]
\end{definition}
Here `$\mathcal{S}$' stands for `$stable$', which comes from stable orbital integrals. 

\subsubsection{Quotient measure}\label{classical_measure}
On the other hand, the quotient measure, which will be defined below, is usually taken to define the  orbital integral in the literature. 
We will explain it closely following \cite[Section 2.2]{CKL}, which is  based on  \cite{Yun13}. 

Let
\[
\left\{
\begin{array}{l}
\textit{$\mathrm{T}_{\gamma}$ be the centralizer of  $\gamma$ via the conjugation in $\mathrm{GL}_{n,F}$ which is a  maximal torus};\\
\textit{$\mathrm{T}_c$ be the maximal compact subgroup of $\mathrm{T}_{\gamma}(F)$};\\
\textit{$dt$ be the Haar measure on $\mathrm{T}_{\gamma}(F)$ such that $\mathrm{vol}(dt, \mathrm{T}_c)=1$};\\
\textit{$dg$ be the Haar measure on $\mathrm{GL}_{n}(F)$ such that $\mathrm{vol}(dg, \mathrm{GL}_n(\mathfrak{o}))=1$};\\
\textit{$d\mu=\frac{dg}{dt}$ be the quotient measure defined on $\mathrm{T}_{\gamma}(F)\backslash \mathrm{GL}_n(F)$.}
\end{array}\right.
\]
The    orbital integral of \cite{Yun13} uses the quotient measure $d\mu$. We denote it by $\mathcal{SO}_{\gamma, d\mu}(\mathbbm{1}_{D_{(k_1,\cdots,k_n)}})$ to emphasize the role of the Haar measure $d\mu$.
This is formulated as follows:
\[
\mathcal{SO}_{\gamma, d\mu}(\mathbbm{1}_{D_{(k_1,\cdots,k_n)}})=\int_{\mathrm{T}_{\gamma}(F)\backslash \mathrm{GL}_n(F)} \mathbbm{1}_{D_{(k_1,\cdots,k_n)}}(g^{-1}\gamma g) d\mu(g).
\]

\subsubsection{Explicit description of $\mathrm{T}_{\gamma}$}\label{sectorus}
This subsection is taken from \cite[Section 2.2.2]{CKL}. 
We will reproduce the relevant part following \cite[Sections 4.1 and 4.8]{Yun13}.
Let
\[
\left\{
\begin{array}{l}
\textit{$B(\gamma)$ be an index set in bijection with the irreducible factors $\chi_{\gamma, i}(x)$ of $\chi_{\gamma}(x)$};\\
\textit{$F_i$ be the finite field extension of $F$ obtained by adjoining a root  of $\chi_{\gamma, i}(x)$.}
\end{array}\right.
\]
Then the centralizer $\mathrm{T}_{\gamma}$ is identified with
\[
\mathrm{T}_{\gamma}\cong \prod_{i\in B(\gamma)} \mathrm{Res}_{F_i/F}\mathbb{G}_m.
\]

Let $\mathrm{L}=\prod_{i\in B(\gamma)}\mathrm{L}_i$ be a Levi subgroup of $\mathrm{GL}_{n, F}$ such that 
\[
\left\{
\begin{array}{l}
\textit{$\mathrm{Res}_{F_i/F}\mathbb{G}_m$ is a maximal torus of $\mathrm{L}_i$ 
 (so $\mathrm{L}_i\cong \mathrm{GL}_{n_i, F}$ with $n_i=[F_i:F]$)};\\
 \textit{$\gamma=(\gamma_i)\in \mathrm{L}(F)$ with $\gamma_i\in \mathrm{L}_i(F)$};\\
\textit{$\mathrm{Res}_{F_i/F}\mathbb{G}_m$ is the centralizer of $\gamma_i$ in $\mathrm{L}_i$.}
\end{array}\right.
\]
Note that this $n_{i}=[F_{i}:F]$ is different to $n_{i}$ defined in \cite{Yun13}.
 Here, $Lie(\mathrm{G})$ is the Lie algebra of $\mathrm{G}$ for any algebraic group $\mathrm{G}$.
Due to the last condition in the above description of $\mathrm{L}$,  we can denote  $\mathrm{Res}_{F_i/F}\mathbb{G}_m$ by $\mathrm{T}_{\gamma_i}$.
Then $\mathrm{T}_{\gamma_i}$ has a natural integral structure given by $\mathrm{Res}_{\mathfrak{o}_{F_i}/\mathfrak{o}_F}\mathbb{G}_m$, which we also denote by $\mathrm{T}_{\gamma_i}$.
Note that $\mathrm{Res}_{\mathfrak{o}_{F_i}/\mathfrak{o}_F}\mathbb{G}_m$ is a smooth group scheme over $\mfo_F$ 
and that $\mathrm{Res}_{\mathfrak{o}_F}^{\mathfrak{o}_{F_i}}\mathbb{G}_m(\mfo_F)=\mfo_{F_i}^{\times}$ is the maximal compact subgroup of $\mathrm{T}_{\gamma_i}(F)=F_i^{\times}$.
Therefore, 
\[
\mathrm{T}_c = \prod_{i\in B(\gamma)} \mfo_{F_i}^{\times}.
\]

\subsubsection{Comparison of two normalizations}
 The difference between two measures $\omega_{\chi_{\gamma}}^{\mathrm{ld}}$ and $d\mu$ is described in  \cite[Proposition 3.29]{FLN} and is stated in \cite[Proposition 2.4]{CKL} for our normalization.

\begin{proposition}\cite[Proposition 3.29]{FLN} or \cite[Proposition 2.4]{CKL}\label{proptrans}
Suppose that $char(F) = 0$ or $char(F)>n$.
The difference between two  orbital integrals
$\mathcal{SO}_{\gamma}$ and $\mathcal{SO}_{\gamma, d\mu}$
 is described by the equation:
\[
\mathcal{SO}_{\gamma}(\mathbbm{1}_{D_{(k_1,\cdots,k_n)}})=
|\Delta_{\gamma}|^{1/2}\cdot
\frac{\#\mathrm{GL}_n(\kappa)q^{-\mathrm{dim}\mathrm{GL_n}}}
{\#\mathrm{T}_{\gamma}(\kappa)q^{-\mathrm{dim}\mathrm{T}_{\gamma}}}
\cdot \left(\prod_{i\in B(\gamma)}|\Delta_{F_i/F}|^{-1/2}\right)
\cdot \mathcal{SO}_{\gamma, d\mu}(\mathbbm{1}_{D_{(k_1,\cdots,k_n)}}).
\]
Here, $\Delta_{F_i/F}$ is the discriminant of the field extension $F_i/F$.
\end{proposition}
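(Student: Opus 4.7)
The plan is to derive the proportionality constant by tracing both measures back to the conjugation orbit map. Since $\gamma$ is regular semisimple, the map
\[
c_\gamma \colon \mathrm{T}_\gamma(F)\backslash \mathrm{GL}_n(F) \longrightarrow G_\gamma(F), \qquad g \mapsto g^{-1}\gamma g,
\]
is an analytic bijection onto the (rational) conjugacy class, so by change of variables the integral $\mathcal{SO}_{\gamma,d\mu}(\mathbbm{1}_{D_{(k_1,\ldots,k_n)}})$ becomes an integral of $\mathbbm{1}_{D_{(k_1,\ldots,k_n)}}$ over $G_\gamma(F)$ against the pushforward measure $(c_\gamma)_* d\mu$. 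The target identity is exactly the assertion that $(c_\gamma)_* d\mu$ equals $|\omega^{\mathrm{ld}}_{\chi_\gamma}|$ up to the displayed constant, so the entire proof reduces to computing the Radon--Nikodym derivative of $|\omega^{\mathrm{ld}}_{\chi_\gamma}|$ against $(c_\gamma)_* d\mu$ at a single regular semisimple point and showing it factors as claimed.

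I would decompose the constant into three independent contributions. The first is the Jacobian $|\Delta_\gamma|^{1/2}$: working over the algebraic closure and using the decomposition $\mathrm{Lie}(\mathrm{GL}_n) = \mathrm{Lie}(\mathrm{T}_\gamma) \oplus \mathrm{im}(\mathrm{ad}\,\gamma)$, the derivative of $c_\gamma$ on the complement is $X \mapsto [X,\gamma]$, whose eigenvalues on the off-diagonal part are $\lambda_i - \lambda_j$ for $i\neq j$, producing $\prod_{i\neq j}(\lambda_i-\lambda_j) = \pm\Delta_\gamma$. Simultaneously, by the very definition of $\omega^{\mathrm{ld}}_{\chi_\gamma} = \omega_{\mathrm{M}_n,\mfo}/\rho_n^*\omega_{\mathbb{A}^n_\mfo}$, the Jacobian of $\rho_n$ restricted to $\mathrm{Lie}(\mathrm{T}_\gamma)$ is the Vandermonde $\prod_{i<j}(\lambda_i-\lambda_j) = \pm\Delta_\gamma^{1/2}$; dividing the first contribution by the second yields exactly $|\Delta_\gamma|^{1/2}$, which is where the hypothesis on $\mathrm{char}(F)$ intervenes to keep these non-degenerate.

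The second contribution $\#\mathrm{GL}_n(\kappa)q^{-\dim\mathrm{GL}_n}/\#\mathrm{T}_\gamma(\kappa)q^{-\dim\mathrm{T}_\gamma}$ comes from Serre's mass formula for smooth group schemes: for a smooth connected group scheme $G$ over $\mfo$ with invariant top form $\omega_G$, one has $\int_{G(\mfo)}|\omega_G| = \#G(\kappa)\cdot q^{-\dim G}$. Applying this to $\mathrm{GL}_{n,\mfo}$ and to $\mathrm{T}_\gamma = \prod_i \mathrm{Res}_{\mfo_{F_i}/\mfo_F}\mathbb{G}_m$ converts the volumes $\mathrm{vol}(\mathrm{GL}_n(\mfo))=1$ and $\mathrm{vol}(\mathrm{T}_c)=1$ defining $d\mu$ into the canonical volumes with respect to the chosen invariant forms, producing this ratio. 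The third contribution $\prod_{i\in B(\gamma)}|\Delta_{F_i/F}|^{-1/2}$ reflects the standard comparison on $\mathrm{T}_\gamma(F) = \prod F_i^\times$ between the self-dual additive-type measure induced by the natural form on $\mathrm{Res}_{F_i/F}\mathbb{G}_m$ and the multiplicative Haar measure giving $\mfo_{F_i}^\times$ mass one, which differs by the discriminant of the extension $F_i/F$.

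Assembling these three factors gives the stated formula; the main obstacle is bookkeeping the exponent of $\Delta_\gamma$ in the Jacobian step, since both $c_\gamma$ and $\rho_n$ contribute powers of the discriminant and only their ratio should survive in $\omega^{\mathrm{ld}}_{\chi_\gamma}$. This is precisely the calculation carried out in the proof of \cite[Proposition 3.29]{FLN}, and my plan is to invoke (or reproduce) that argument, restricting to our normalizations as is already done in \cite[Proposition 2.4]{CKL}. No substantially new ingredient beyond the three classical inputs above is required.
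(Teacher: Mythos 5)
The paper does not prove this proposition at all: it is quoted verbatim as a known result and cited to \cite[Proposition 3.29]{FLN} and \cite[Proposition 2.4]{CKL}. Your sketch ultimately does the same thing (invoking those references for the actual computation), and the three-factor decomposition you outline — the Jacobian of the orbit map divided by the Jacobian of $\rho_n$ on the Cartan giving $|\Delta_\gamma|^{1/2}$, Serre's mass formula converting between the invariant-form volumes and the normalizations $\mathrm{vol}(\mathrm{GL}_n(\mfo))=\mathrm{vol}(\mathrm{T}_c)=1$, and the discriminant factors from comparing the integral model $\prod_i\mathrm{Res}_{\mfo_{F_i}/\mfo}\mathbb{G}_m$ of the torus to the ambient coordinates — is a faithful account of what the cited FLN argument does, so this is in substance the same approach as the paper.
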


In the case that $\chi_{\gamma}(x)$ is irreducible, let $F_{\chi_{\gamma}}=F[x]/(\chi_{\gamma}(x))$ and let $\Delta_{F_{\chi_{\gamma}}/F}$ be the discriminant of the field extension $F_{\chi_{\gamma}}/F$.

\begin{proposition}{\cite[Proposition 2.5]{CKL}}\label{propserre}
If $\chi_{\gamma}(x)$ is irreducible, then 
$|\Delta_{\gamma}|$ and $\Delta_{F_{\chi_{\gamma}}/F}$ are related by the following equation:
\[
|\Delta_{F_{\chi_{\gamma}}/F}|^{1/2}=q^{S(\gamma)}\cdot |\Delta_{\gamma}|^{1/2},
\]
 where $S(\gamma)$ is the Serre invariant, which is the relative $\mfo$-length $[\mathfrak{o}_{F_{\chi_{\gamma}}}:\mathfrak{o}[x]/(\chi_{\gamma}(x))]$ 
 (see \cite[Section 2.1]{Yun13}).
\end{proposition}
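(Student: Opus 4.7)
The plan is to derive the identity from the standard conductor--discriminant relation between an $\mfo$-order and its integral closure, applied to the order $\mfo[\alpha]:=\mfo[x]/(\chi_{\gamma}(x))$ inside $\mfo_{F_{\chi_{\gamma}}}$, where $\alpha$ denotes the image of $x$ in $F_{\chi_{\gamma}}$. Since $\chi_{\gamma}(x)$ is monic with coefficients in $\mfo$ (by the hypothesis $\gamma\in \mathrm{M}_n(\mfo)$), $\mfo[\alpha]$ is a free $\mfo$-submodule of $\mfo_{F_{\chi_{\gamma}}}$ of rank $n$ spanning $F_{\chi_{\gamma}}$ over $F$, hence an $\mfo$-order in the field $F_{\chi_{\gamma}}$. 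A direct computation with the power basis $1,\alpha,\dots,\alpha^{n-1}$ shows that the discriminant of $\mfo[\alpha]/\mfo$, defined as the determinant of the trace form on that basis, equals $\pm\Delta_{\gamma}$; this is the classical identity relating the discriminant of a polynomial to the discriminant of the monogenic order it cuts out.

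Next I would invoke the standard formula that for an $\mfo$-order $R$ inside its integral closure $\widetilde{R}$ in a finite separable field extension of $F$, one has
\[
\mathrm{disc}(R/\mfo)\;=\;[\widetilde{R}:R]^{2}\cdot\mathrm{disc}(\widetilde{R}/\mfo)
\]
as fractional ideals of $\mfo$, where $[\widetilde{R}:R]$ denotes the module index. Applied to $R=\mfo[\alpha]$ and $\widetilde{R}=\mfo_{F_{\chi_{\gamma}}}$, the left-hand side generates the ideal $(\Delta_{\gamma})$, the rightmost factor generates $(\Delta_{F_{\chi_{\gamma}}/F})$, and the module index satisfies $[\mfo_{F_{\chi_{\gamma}}}:\mfo[\alpha]]=(\pi^{S(\gamma)})$. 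Indeed, over a DVR the module index of an inclusion of two free $\mfo$-modules of the same rank equals $(\pi^{\ell})$ where $\ell$ is the $\mfo$-length of the quotient, and by the stated definition of $S(\gamma)$ this length is exactly $S(\gamma)$.

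Passing to absolute values using $|\pi|=q^{-1}$ then yields
\[
|\Delta_{\gamma}|\;=\;q^{-2S(\gamma)}\cdot|\Delta_{F_{\chi_{\gamma}}/F}|,
\]
and extracting square roots gives the claimed relation. I do not expect any serious obstacle: the argument is essentially a dictionary translation, and the only point that demands mild care is the identification of the module index with $(\pi^{S(\gamma)})$, which over a DVR follows immediately from the elementary divisor theorem applied to the inclusion $\mfo[\alpha]\hookrightarrow \mfo_{F_{\chi_{\gamma}}}$ of rank-$n$ free $\mfo$-modules.
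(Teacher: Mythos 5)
Your proof is correct, and it is the standard argument: identify $|\Delta_{\gamma}|$ with the discriminant of the monogenic order $\mfo[x]/(\chi_{\gamma})$, apply the order--discriminant relation $\mathrm{disc}(R/\mfo)=[\widetilde R:R]^2\cdot\mathrm{disc}(\widetilde R/\mfo)$ with $\widetilde R=\mfo_{F_{\chi_\gamma}}$, and identify the module index with $\pi^{S(\gamma)}$ via the elementary-divisor theorem over the DVR $\mfo$. The present paper cites \cite[Proposition 2.5]{CKL} without reproducing a proof, but this conductor--discriminant computation is the expected (and essentially the only natural) route, so your proposal matches the intended argument. One small remark: the power-basis discriminant equals $\Delta_\gamma$ on the nose, not merely up to sign (both are $\prod_{i<j}(\alpha_i-\alpha_j)^2$), though this is immaterial once absolute values are taken.
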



\subsection{Linear algebraic interpretation}
This section is taken from \cite[Section 3.2]{CKL}.
Let us introduce the following notations and related facts:
\begin{itemize}
\item Let $L$ and $M$ be free $\mfo$-modules of rank $n$ in $L\otimes_\mfo F$.

\item Define a functor $\cL(L,M)$ on the category of flat $\mfo$-algebras to the category of sets such that 
\[
\cL(L,M)(R)=\{f\mid\textit{$f:L\otimes R\rightarrow M\otimes R $ is $R$-linear and surjective}\}
\]
for a flat $\mfo$-algebra $R$.
The functor $\cL(L,M)$ is then represented by an open subscheme of $\mathrm{Hom}_{\mfo}(L,M)$, which is an affine space over $\mfo$ of dimension $n^2$, so as to be smooth over $\mfo$.
Here, `open subscheme' structure is due to surjectivity in $\cL(L,M)(R)$. 

\item 
We can assign the $\mfo$-scheme structures on $\mathrm{Aut}(L)(\mfo)$ and $\mathrm{End}(L)(\mfo)$ by defining the functors on the category of flat $\mfo$-algebras to the category of  sets as follows:
\[
\mathrm{Aut}(L)(R)=\mathrm{Aut}(L\otimes R) ~~~~ \textit{   and   }  ~~~~~\End(L)(R)=\End_{R}(L\otimes R)
\]
for an $\mfo$-algebra $R$.
We sometimes use $\mathrm{End}(L)$ to stand for the set $\mathrm{End}(L)(\mfo)$ and use $\mathrm{Aut}(L)$ to stand for the set $\mathrm{Aut}(L)(\mfo)$ if there is no confusion.

\item We express $\M_{n}=\End(L)$ and $\mathrm{GL}_n=\mathrm{Aut}(L)$ so that 
 $\M_n(F)=\End(L)(F)$ and $\mathrm{GL}_n(F)=\mathrm{Aut}(L)(F)$.
Then  we may and do regard $\cL(L,M)(\mfo)$  as  an open subset  of $\mathrm{GL}_n(F)$, which is open in  $\M_{n}(F)$.

\item Define the orbit of $\gamma$ inside $\cL(L,M)(\mfo)$ as follows;
\[O_{\gamma, \cL(L,M)}=G_\gamma(F)\cap \cL(L,M)(\mfo)=\{f\in \cL(L,M)(\mfo) \mid \vpi_n(f)=\vpi_n(\gamma)\}\] 
where the intersection is taken inside $\End(L)(F)$ and define the volume of $O_{\gamma,\cL(L,M)}$ as follows; 
\[
\mathcal{SO}_{\gamma,\cL(L,M)}=\int_{O_{\gamma,\cL(L,M)}}|\omega_{\chi_\gamma}^{\mathrm{ld}}|.
\]
\end{itemize}
\begin{definition}{\cite[Definition 3.5]{CKL}}\label{def_type}
    The type of $M$ is defined to 
    be $(k_1,\cdots,k_n)$ where $k_i\in \mathbb{Z}$ and $ k_i\leq k_j$ for $i\leq j$, such that $M\subset \pi^{k_1} L$ satisfying
    \[
    (\pi^{k_1} L)/M\cong \mfo/\pi^{k_1-k_1}\mfo\oplus\mfo/\pi^{k_2-k_1}\mfo\oplus \cdots \oplus \mfo/\pi^{k_n-k_1}\mfo.
    \]
\end{definition}
For example, if $L$ is spanned by $(e_1, e_2)$ and if $M$ is spanned by $(1/\pi \cdot e_1, \pi \cdot  e_2)$, then the type of $M$ is $(-1, 1)$.

\begin{remark}
 The type defined in  \cite[Definition 3.5]{CKL} is slightly different from our definition.
 In loc. cit., $k_i$ is always positive, whereas we allow $k_i$ to be either zero or negative.
 
 For example, if $L$ is spanned by $(e_1, e_2)$ and if $M$ is spanned by $(e_1, \pi \cdot  e_2)$, then the type of $M$ is $(0, 1)$.
But in \cite[Definition 3.5]{CKL}, the type of $M$ was $(1)$, by ignoring $0$'s.
\end{remark}

\begin{lemma}{\cite[Lemma 3.6]{CKL}}\label{lemma36}
If the type of $M$ is the same as that of $M'$, then $\mathcal{SO}_{\gamma, \cL(L,M)}=\mathcal{SO}_{\gamma, \cL(L,M')}$.
\end{lemma}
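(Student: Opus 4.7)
The plan is to produce an $\mfo$-linear automorphism of $L$ that carries $M$ onto $M'$, and then transport one orbit onto the other by conjugation. Since $M$ and $M'$ both have type $(k_1,\ldots,k_n)$, they are contained in $\pi^{k_1}L$ with identical invariant factors $\pi^{k_i-k_1}$. The elementary divisor theorem over the discrete valuation ring $\mfo$ then supplies $\mfo$-bases $(e_i)$ and $(e_i')$ of $\pi^{k_1}L$ such that $M=\bigoplus_{i} \pi^{k_i-k_1}\mfo\, e_i$ and $M'=\bigoplus_{i} \pi^{k_i-k_1}\mfo\, e_i'$. Multiplication by $\pi^{-k_1}$ identifies $\mathrm{Aut}_{\mfo}(\pi^{k_1}L)$ with $\mathrm{Aut}_\mfo(L)=\mathrm{GL}_n(\mfo)$, so the assignment $e_i\mapsto e_i'$ determines an element $g\in\mathrm{GL}_n(\mfo)$ whose $F$-linear extension to $L\otimes_\mfo F$ sends $M$ onto $M'$.

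I would then consider the conjugation map $\phi_g\colon X\mapsto gXg^{-1}$. Viewing $f\in\cL(L,M)(\mfo)$ as an $F$-endomorphism of $L\otimes F$ whose restriction to $L$ surjects onto $M$, the composition $gfg^{-1}$ restricts to a surjection $L\to g(M)=M'$, so $gfg^{-1}\in\cL(L,M')(\mfo)$; with inverse $h\mapsto g^{-1}hg$, this gives a bijection $\cL(L,M)(\mfo)\xrightarrow{\sim}\cL(L,M')(\mfo)$. Since $\mathrm{Ad}(g)$ fixes $\chi_\gamma$, $\phi_g$ stabilizes $G_\gamma(F)$ and thus restricts to a bijection $O_{\gamma,\cL(L,M)}\xrightarrow{\sim} O_{\gamma,\cL(L,M')}$.

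It remains to show that $\phi_g$ preserves the measure $|\omega_{\chi_\gamma}^{\mathrm{ld}}|$, which is the main (and essentially the only) point requiring care. The map $\mathrm{Ad}(g)\colon \M_{n,\mfo}\to\M_{n,\mfo}$ is an $\mfo$-linear automorphism whose determinant on the $n^2$-dimensional $\mfo$-module $\M_n(\mfo)$ equals $\det(g)^n\det(g^{-1})^n=1$, so it preserves the translation-invariant top form $\omega_{\M_{n,\mfo}}$. Because $\mathrm{Ad}(g)$ is compatible with $\rho_n$, it also preserves $\rho_n^{\ast}\omega_{\mathbb{A}^n_\mfo}$. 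Definition~\ref{diff1} exhibits $\omega_{\chi_\gamma}^{\mathrm{ld}}$ on $G_\gamma$ as the quotient of these two forms, so $\omega_{\chi_\gamma}^{\mathrm{ld}}$ is invariant under $\phi_g$, and change of variables yields $\mathcal{SO}_{\gamma,\cL(L,M)}=\mathcal{SO}_{\gamma,\cL(L,M')}$.
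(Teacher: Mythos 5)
Your proof is correct and self-contained. Note that the paper itself offers essentially no proof of this lemma — it simply cites \cite[Lemma 3.6]{CKL} and remarks that the cited argument (given there for $k_1\ge 0$) carries over to $k_1<0$ — so there is no in-paper proof to compare against directly; your argument reconstructs what such a proof must look like, and it works uniformly for all $k_1\in\mathbb{Z}$. The three ingredients are all sound: the elementary divisor theorem over the discrete valuation ring $\mfo$ does produce $g\in\mathrm{GL}_n(\mfo)=\mathrm{Aut}_\mfo(L)$ with $g(M)=M'$; $\mathrm{Ad}(g)$ then restricts to a bijection $O_{\gamma,\cL(L,M)}\to O_{\gamma,\cL(L,M')}$ because $g\in\mathrm{Aut}(L)$ gives $gfg^{-1}(L)=g(f(L))=g(M)=M'$ and conjugation fixes characteristic polynomials; and $\mathrm{Ad}(g)$ has determinant $\det(g)^n\det(g)^{-n}=1$ on $\M_n$ while satisfying $\rho_n\circ\mathrm{Ad}(g)=\rho_n$, so by Definition~\ref{diff1} it preserves $\omega_{\chi_\gamma}^{\mathrm{ld}}$ on $G_\gamma$, and the change-of-variables formula gives $\mathcal{SO}_{\gamma,\cL(L,M)}=\mathcal{SO}_{\gamma,\cL(L,M')}$.
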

\begin{proof}
When $k_1\geq 0$, it is proved in \cite[Lemma 3.6]{CKL}.
All arguments of loc. cit. hold when $k_1 < 0$. 
\end{proof}

This lemma enables us to define the following two notations:
\[
\left\{
\begin{array}{l l}
\mathcal{SO}_{\gamma, (k_1, \cdots, k_{n})}:=\mathcal{SO}_{\gamma, \cL(L,M)}\textit{ where the type of $M$ is $(k_1, \cdots, k_{n})$};\\
c_{(k_1, \cdots, k_{n})}:=\#\{M\subset L\otimes F \mid \textit{the type of $M$ is $(k_1, \cdots, k_{n})$}\}.
\end{array} \right.
\]
\begin{proposition}\label{prop:hecke_type}
We have
\[
D_{(k_1,\cdots,k_n)}=\bigsqcup_{M:type(M)=(k_1, \cdots, k_n)}\cL(L,M)(\mfo) ~~~~ 
 \textit{   and   }  ~~~~~   \mathcal{SO}_{\gamma}(\mathbbm{1}_{(D_{(k_1,\cdots,k_n)})})=c_{(k_1,\cdots,k_n)}\cdot \mathcal{SO}_{\gamma,{(k_1,\cdots,k_n)}}
\]    
\end{proposition}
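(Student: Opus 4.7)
The plan is to establish the set-theoretic partition via the assignment $f \mapsto f(L)$, using the Cartan decomposition, and then to derive the orbital integral identity by integrating $|\omega_{\chi_\gamma}^{\mathrm{ld}}|$ over the partition and applying Lemma \ref{lemma36}. For the first identity, I would view an element $f \in \mathrm{GL}_n(F)$ as an $F$-linear automorphism of $L \otimes_\mfo F$ and consider the image lattice $M := f(L) \subset L \otimes F$. If $f \in D_{(k_1, \ldots, k_n)}$, writing $f = g_1 \cdot \mathrm{diag}(\pi^{k_1}, \ldots, \pi^{k_n}) \cdot g_2$ with $g_1, g_2 \in \mathrm{GL}_n(\mfo)$ shows that $f(L)$ has a basis $g_1(\pi^{k_1} e_1), \ldots, g_1(\pi^{k_n} e_n)$, where $(e_i)$ is the standard basis of $L$; this matches Definition \ref{def_type} and gives $f(L)$ of type $(k_1, \ldots, k_n)$, with $f \in \cL(L, M)(\mfo)$ by construction. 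Conversely, any $f \in \cL(L, M)(\mfo)$ with $M$ of type $(k_1, \ldots, k_n)$ is a surjection between rank-$n$ free $\mfo$-modules, hence an isomorphism, so $f \in \mathrm{GL}_n(F)$; its Cartan invariants, which are unique, must match the type of $f(L) = M$, forcing $f \in D_{(k_1, \ldots, k_n)}$. Disjointness of the union is immediate since $M = f(L)$ is recovered from $f$.

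For the second identity, I would intersect the partition with $G_\gamma(F)$ to obtain
\[
G_\gamma(F) \cap D_{(k_1, \ldots, k_n)} = \bigsqcup_{M:\,\mathrm{type}(M) = (k_1, \ldots, k_n)} O_{\gamma, \cL(L, M)},
\]
and then apply Definition \ref{def:orbitalint} to integrate $|\omega_{\chi_\gamma}^{\mathrm{ld}}|$ termwise, obtaining $\mathcal{SO}_\gamma(\mathbbm{1}_{D_{(k_1, \ldots, k_n)}}) = \sum_M \mathcal{SO}_{\gamma, \cL(L, M)}$. By Lemma \ref{lemma36}, each summand equals $\mathcal{SO}_{\gamma, (k_1, \ldots, k_n)}$, and there are exactly $c_{(k_1, \ldots, k_n)}$ terms by the very definition of this count, so the formula follows.

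The conceptual content is exhausted by the Cartan decomposition and Lemma \ref{lemma36}; the only bookkeeping subtlety is when some $k_i < 0$, so that $M \not\subset L$, but since $f \in \mathrm{Hom}_\mfo(L, M)$ remains a well-defined $\mfo$-linear map into a lattice of $L \otimes F$, and surjectivity plus equal rank force $f \in \mathrm{GL}_n(F)$, the argument passes uniformly through all integer tuples $(k_1, \ldots, k_n)$, so I expect no genuine obstacle beyond careful set-theoretic bookkeeping.
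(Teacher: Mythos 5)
Your proof is correct and takes essentially the same route as the paper's: both rest on the observation that $X \in D_{(k_1,\ldots,k_n)}$ if and only if the lattice $X(L)$ has type $(k_1,\ldots,k_n)$ (which the paper states without proof and you verify via Cartan decomposition), followed by intersecting the partition with $G_\gamma(F)$ and applying Lemma \ref{lemma36} to collapse the sum to $c_{(k_1,\ldots,k_n)}$ equal terms. Your version merely spells out the bookkeeping (both inclusions, disjointness, the case $k_1 < 0$) that the paper leaves implicit.
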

\begin{proof}
Recall that we identify  $\mathrm{GL}_n(F)=\mathrm{Aut}(L\otimes F)$.
For  $X\in \mathrm{Aut}(L\otimes F)$, let $X(L)$ be the image of $X$ so that $X(L)$ is a free $\mfo$-module of rank $n$ in $L\otimes_{\mfo} F$. 
Then     $X\in D_{(k_1,\cdots,k_n)}$ if and only if the type of $X(L)$ is $(k_1,\cdots,k_n)$.
This directly yields that 
\[
D_{(k_1,\cdots,k_n)}=\bigsqcup_{M:type(M)=(k_1, \cdots, k_n)}\cL(L,M)(\mfo).
\]
Then Lemma  \ref{lemma36} yields the desired second statement.
\end{proof}

For a further use, we state the relation between $c_{(k_1, \cdots, k_{n})}$ and $c_{(k_1-k_1, \cdots, k_{n}-k_1)}$.
\begin{proposition}
Two integers $c_{(k_1, \cdots, k_{n})}$ and $c_{(k_1-k_1, \cdots, k_{n}-k_1)}$ are equal. 
\end{proposition}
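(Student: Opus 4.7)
The plan is to establish a bijection between the set of lattices of type $(k_1,\ldots,k_n)$ and the set of lattices of type $(0,k_2-k_1,\ldots,k_n-k_1)$ via scalar multiplication by $\pi^{-k_1}$.

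Concretely, I would define the map
\[
\Phi:\{M\subset L\otimes F\mid \mathrm{type}(M)=(k_1,\ldots,k_n)\}\longrightarrow \{M'\subset L\otimes F\mid \mathrm{type}(M')=(0,k_2-k_1,\ldots,k_n-k_1)\}
\]
by $\Phi(M)=\pi^{-k_1}M$. The first task is to check that $\Phi$ is well-defined: if $M\subset \pi^{k_1}L$ with $(\pi^{k_1}L)/M\cong \bigoplus_{i=1}^n \mfo/\pi^{k_i-k_1}\mfo$, then multiplying the inclusion $M\subset \pi^{k_1}L$ by $\pi^{-k_1}$ gives $\pi^{-k_1}M\subset L$, and since multiplication by $\pi^{-k_1}$ is an $\mfo$-module isomorphism from $\pi^{k_1}L$ to $L$ sending $M$ onto $\pi^{-k_1}M$, we obtain
\[
L/(\pi^{-k_1}M)\cong (\pi^{k_1}L)/M\cong \bigoplus_{i=1}^n \mfo/\pi^{k_i-k_1}\mfo,
\]
so $\Phi(M)$ has type $(0,k_2-k_1,\ldots,k_n-k_1)$ in the sense of Definition \ref{def_type}.

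The inverse map is clearly $M'\mapsto \pi^{k_1}M'$, which by the symmetric argument lands in the correct set and composes with $\Phi$ to give the identity on each side. Hence $\Phi$ is a bijection, and counting the two sides yields $c_{(k_1,\ldots,k_n)}=c_{(0,k_2-k_1,\ldots,k_n-k_1)}$, which is exactly the claim since $k_1-k_1=0$.

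There is no real obstacle here; the only subtle point is making sure that the convention in Definition \ref{def_type} admits negative entries (as the authors explicitly remark just above Lemma \ref{lemma36}), so that both sides of the bijection are defined even when $k_1<0$. Since the definition allows this, the argument above is complete.
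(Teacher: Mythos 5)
Your proof is correct and follows essentially the same route as the paper: both establish the bijection $M\mapsto \pi^{-k_1}M$ with inverse $N\mapsto \pi^{k_1}N$ between the two sets of lattices. Your write-up is a bit more explicit about verifying that $\pi^{-k_1}M$ has the stated type, but the idea is identical.
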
\label{prop:counting}
\begin{proof}
It suffices to show that  the following map is bijective:
\[
\{M\subset L\otimes_\mfo F \mid type(M)=(k_1, \cdots, k_{n})\} 
\rightarrow \{N\subset L\otimes_\mfo F \mid type(N)=(k_1-k_1, \cdots, k_{n}-k_1)\},\ M\mapsto \pi^{-k_1}M.
\]
The inverse map $N\mapsto \pi^{k_1}N$ is well-defined and the composite of two maps are the identity. 
This completes the proof.    
\end{proof}

\subsection{Reduction on $\mathcal{SO}_{\gamma,\mathrm{Hom}(L,\pi^{k}L)}$}

\begin{proposition}\label{prop_red}{\cite[Proposition 4.11]{CKL}}
Suppose that $M \left(\subset \pi^{k_1}L\right)$ is of type $(k_1,\cdots,k_n)$.
Suppose that 
\[
\chi_{\gamma}(\pi^{k_1} x)/\pi^{nk_1}=x^n+c_{1}^{(k_1)}x^{n-1}+\cdots +c_{n-1}^{(k_1)}x+c_n^{(k_1)}\in \mfo[x], ~~~~  \textit{ where  } ~~~~~ c_i^{(k)}:=c_i/\pi^{ik_1}\in \mfo.
\]
Let 
$\gamma^{(k_1)}$ be an element of $\End(L)(\mfo)$ whose characteristic polynomial is 
$\chi_{\gamma^{(k_1)}}(x):=\chi_{\gamma}(\pi^{k_1} x)/\pi^{nk_1}$.
Then we have
\[
\mathcal{SO}_{\gamma, (k_1,\cdots,k_n)}=q^{-k_1\cdot \frac{n(n-1)}{2}}\mathcal{SO}_{\gamma^{(k_1)}, (k_1-k_1,\cdots,k_n-k_1)}.
\]
\end{proposition}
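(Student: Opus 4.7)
The plan is to exhibit an explicit scheme morphism that simultaneously identifies the two orbits and rescales the measure $|\omega_{\chi_\gamma}^{\mathrm{ld}}|$ by a uniform power of $q$. The natural candidate is multiplication by $\pi^{-k_1}$: define $\Psi: \mathrm{M}_{n,F}\to \mathrm{M}_{n,F}$ by $X\mapsto \pi^{-k_1}X$. Since $M\subset \pi^{k_1}L$, any surjection $f:L\twoheadrightarrow M$ satisfies $\pi^{-k_1}f:L\twoheadrightarrow \pi^{-k_1}M$, and $\pi^{-k_1}M\subset L$ is a sublattice of type $(0,k_2-k_1,\ldots,k_n-k_1)$. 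Hence $\Psi$ restricts to a bijection $\cL(L,M)(\mfo)\xrightarrow{\sim}\cL(L,\pi^{-k_1}M)(\mfo)$.

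Next I would check that $\Psi$ matches the two orbits. A direct computation gives $\chi_{\pi^{-k_1}X}(x)=\pi^{-nk_1}\chi_X(\pi^{k_1}x)$, so if $f$ has characteristic polynomial $\chi_\gamma$ then $\pi^{-k_1}f$ has characteristic polynomial $\chi_{\gamma^{(k_1)}}$. Consequently $\Psi$ restricts further to a bijection
\[
\Psi: O_{\gamma,\cL(L,M)}\xrightarrow{\sim}O_{\gamma^{(k_1)},\cL(L,\pi^{-k_1}M)}.
\]

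The key step is tracking how the relative top form $\omega_{\chi_\gamma}^{\mathrm{ld}}$ transforms under $\Psi$. Since $\Psi$ scales all $n^2$ matrix entries by $\pi^{-k_1}$, one has $\Psi^{\ast}\omega_{\mathrm{M}_n}=\pi^{-k_1n^2}\omega_{\mathrm{M}_n}$. On the coefficient side the induced map is $\Psi_{\mathbb{A}}:(c_1,\ldots,c_n)\mapsto(\pi^{-k_1}c_1,\pi^{-2k_1}c_2,\ldots,\pi^{-nk_1}c_n)$, satisfying $\rho_n\circ\Psi=\Psi_{\mathbb{A}}\circ\rho_n$ and pulling back $\omega_{\mathbb{A}^n}$ to $\pi^{-k_1\cdot n(n+1)/2}\omega_{\mathbb{A}^n}$. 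Pulling back the defining identity $\rho_n^{\ast}\omega_{\mathbb{A}^n}\otimes\omega_{\chi_{\gamma^{(k_1)}}}^{\mathrm{ld}}=\omega_{\mathrm{M}_n}$ by $\Psi$ and comparing exponents yields
\[
\Psi^{\ast}\omega_{\chi_{\gamma^{(k_1)}}}^{\mathrm{ld}}
=\pi^{-k_1 n^2+k_1\cdot n(n+1)/2}\,\omega_{\chi_\gamma}^{\mathrm{ld}}
=\pi^{-k_1\cdot n(n-1)/2}\,\omega_{\chi_\gamma}^{\mathrm{ld}}.
\]

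Taking absolute values and applying the change of variables formula to the bijection $\Psi$ between the two orbits then gives
\[
\mathcal{SO}_{\gamma^{(k_1)},(0,k_2-k_1,\ldots,k_n-k_1)}
=q^{k_1\cdot n(n-1)/2}\,\mathcal{SO}_{\gamma,(k_1,\ldots,k_n)},
\]
which rearranges to the claim. The main obstacle I expect is not the scaling arithmetic itself but confirming that $\Psi$ sends the smooth locus $\mathrm{M}_{n,F}^{\ast}$ of $\rho_n$ to itself and that the short exact sequence of differentials used to construct $\omega_{\chi_\gamma}^{\mathrm{ld}}$ pulls back compatibly under $\Psi$. Both follow from the functoriality of the cotangent complex together with the fact that $\Psi$ is an isomorphism, so once this is set up the whole argument reduces to the monomial scaling computation above.
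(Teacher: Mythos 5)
Your proof is correct. The paper itself does not prove this statement; it simply cites \cite[Proposition 4.11]{CKL} and remarks that the argument there extends to $k_1<0$ once $\chi_{\gamma^{(k_1)}}$ has integral coefficients. Your scaling argument via $\Psi:X\mapsto\pi^{-k_1}X$ is the natural self-contained proof and is almost certainly the same idea used in the cited reference: the identity $\chi_{\pi^{-k_1}X}(x)=\pi^{-nk_1}\chi_X(\pi^{k_1}x)$ gives the equivariance $\rho_n\circ\Psi=\Psi_{\mathbb{A}}\circ\rho_n$, and the exponent $n^2-n(n+1)/2=n(n-1)/2$ falls out from comparing the two Jacobian factors in the defining relation $\rho_n^{\ast}\omega_{\mathbb{A}^n}\otimes\omega_{\chi_\gamma}=\omega_{\mathrm{M}_n}|_{\mathrm{M}_{n,F}^{\ast}}$. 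Two minor points worth tightening: (i) the form $\omega_{\chi_\gamma}$ is a global relative top form on $\mathrm{M}_{n,F}^{\ast}$ (independent of $\gamma$), so the clean way to state your key identity is $\Psi^{\ast}\omega_{\chi_\gamma}=\pi^{-k_1 n(n-1)/2}\omega_{\chi_\gamma}$ on $\mathrm{M}_{n,F}^{\ast}$, and only then restrict to the fiber $G_\gamma$ (which $\Psi$ maps isomorphically onto $G_{\gamma^{(k_1)}}$) to deduce the relation between the two leading forms; as written you pull back the restricted form $\omega^{\mathrm{ld}}$ along a map that does not cover the identity of $\mathbb{A}^n_F$, which is fine but deserves a word of explanation. (ii) Your worry about $\Psi$ preserving $\mathrm{M}_{n,F}^{\ast}$ is indeed immediate, since $\Psi$ and $\Psi_{\mathbb{A}}$ are isomorphisms intertwining $\rho_n$ with itself, so smoothness of $\rho_n$ at $X$ and at $\Psi(X)$ are equivalent; no cotangent-complex machinery is needed. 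You should also invoke Lemma~\ref{lemma36} explicitly to justify replacing the specific target lattice $\pi^{-k_1}M$ by a representative of type $(0,k_2-k_1,\dots,k_n-k_1)$, though this is exactly what the notation $\mathcal{SO}_{\gamma^{(k_1)},(0,\dots)}$ presupposes.
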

This is proved in \cite[Proposition 4.11]{CKL} when $k_1\geq 0$. But all arguments in loc. cit. hold whenever $\chi_{\gamma^{(k_1)}}(x)$ has coefficients in $\mfo$, especially when $k_1<0$.

\section{Explicit formula for orbital integrals  when $\gamma$ is elliptic}\label{sec4}
We  collect a few facts  about $\mathcal{SO}_{\gamma}(\mathbbm{1}_{\mathrm{M}_n}(\mfo))$, taken  from \cite{CKL}.

\begin{proposition}\cite[Proposition 3.7]{CKL}\label{prop_stratification}
We have
    \[
    O_{\gamma,\mathrm{M}_n(\mfo)}=\bigsqcup_{\substack{(k_1, \cdots, k_{n}),  \\ \sum k_i=\mathrm{ord}(c_n),\ k_1\geq 0 }}
\Bigg(\bigsqcup_{\substack{M:type(M)= \\ (k_1, \cdots, k_{n})}} O_{\gamma, \cL(L,M)}\Bigg)\textit{ and so }
\mathcal{SO}_{\gamma}(\mathbbm{1}_{\mathrm{M}_n(\mfo)})= \sum\limits_{\substack{(k_1, \cdots, k_n),  \\ \sum k_i=\mathrm{ord}(c_n),\ k_1 \geq 0 }}
\mathcal{SO}_{\gamma}(\mathbbm{1}_{D_{(k_1,\cdots,k_n)}}),
\]
where $O_{\gamma,\mathrm{M}_n(\mfo)}=G_\gamma(F)\cap \mathrm{M}_n(\mfo)$.
\end{proposition}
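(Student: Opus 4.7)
The plan is to establish the set-theoretic stratification by identifying each $X \in O_{\gamma,\mathrm{M}_n(\mfo)}$ with the lattice $M := X(L)$, and then deduce the volume identity by integrating $|\omega_{\chi_\gamma}^{\mathrm{ld}}|$ and invoking Proposition~\ref{prop:hecke_type}.

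First I would take an arbitrary $X \in O_{\gamma,\mathrm{M}_n(\mfo)} = G_\gamma(F)\cap \mathrm{M}_n(\mfo)$. Since $\gamma$ is regular semisimple and lies in $\mathrm{GL}_n(F)$, we have $G_\gamma(F) \subset \mathrm{GL}_n(F)$, so $X$ is invertible over $F$ with $\det X = (-1)^n c_n \neq 0$, and $M := X(L)$ is a rank-$n$ $\mfo$-lattice contained in $L$. By construction $X \in \cL(L,M)(\mfo)$, hence $X \in O_{\gamma,\cL(L,M)}$. Conversely, every $X \in O_{\gamma,\cL(L,M)}$ with $M \subset L$ belongs to $\mathrm{M}_n(\mfo)$. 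Since $M$ is recovered from $X$ as its image, the resulting union over lattices $M \subset L$ is disjoint.

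Next I would check which types $(k_1,\dots,k_n)$ actually occur. By Definition~\ref{def_type}, $k_1$ is the largest integer with $M \subset \pi^{k_1} L$, so $M \subset L$ is equivalent to $k_1 \geq 0$. For the sum condition, I compute the $\mfo$-length of $L/M$ in two ways: on the one hand, Smith normal form applied to $X\colon L \to L$ gives $\mathrm{length}(L/X(L)) = \mathrm{ord}(\det X) = \mathrm{ord}(c_n)$; on the other hand, the short exact sequence $0 \to \pi^{k_1}L / M \to L/M \to L/\pi^{k_1}L \to 0$ yields length $\sum_{i=1}^{n}(k_i - k_1) + nk_1 = \sum_i k_i$. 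Equating the two forces $\sum k_i = \mathrm{ord}(c_n)$. Grouping the lattices of common type produces the inner disjoint union stated in the proposition.

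Finally, integrating $|\omega_{\chi_\gamma}^{\mathrm{ld}}|$ over both sides of the stratification yields the volume identity: the left side is $\mathcal{SO}_\gamma(\mathbbm{1}_{\mathrm{M}_n(\mfo)})$ by Definition~\ref{def:orbitalint}, while, by Proposition~\ref{prop:hecke_type}, the inner union $\bigsqcup_{M : \mathrm{type}(M) = (k_1,\dots,k_n)} O_{\gamma,\cL(L,M)}$ coincides with $G_\gamma(F) \cap D_{(k_1,\dots,k_n)}$, integrating to $\mathcal{SO}_\gamma(\mathbbm{1}_{D_{(k_1,\dots,k_n)}})$. No step poses a serious obstacle; the only item that merits some care is the length computation relating $\sum k_i$ to $\mathrm{ord}(c_n)$, which is a standard application of Smith normal form over a DVR.
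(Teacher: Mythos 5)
The paper does not reproduce a proof here; it simply cites \cite[Proposition 3.7]{CKL}. So there is nothing in this manuscript to compare your argument against directly, but your proof is correct, complete, and is the natural argument one would expect to find in the reference.

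Your decomposition via $X \mapsto M := X(L)$ is the right move: because $G_\gamma(F) \subset \mathrm{GL}_n(F)$ (the characteristic polynomial of $\gamma$ has nonzero constant term $c_n = (-1)^n\det\gamma$), every $X$ in the orbit inside $\mathrm{M}_n(\mfo)$ is an injective endomorphism of $L$ whose image is a full-rank sublattice, and $X \in \cL(L,X(L))(\mfo)$ by construction. Disjointness is immediate since $M$ is recovered as $X(L)$. Your two constraints on the type are handled cleanly: $k_1 \geq 0$ is equivalent to $M \subset L$, which holds since $X \in \End_\mfo(L)$, and the length identity $\sum_i k_i = \mathrm{length}_\mfo(L/M) = \mathrm{ord}(\det X) = \mathrm{ord}(c_n)$ via Smith normal form over the DVR is exactly right; your short-exact-sequence bookkeeping $\sum(k_i - k_1) + n k_1 = \sum k_i$ is a fine alternative to writing $L/M \cong \bigoplus_i \mfo/\pi^{k_i}\mfo$ directly. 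Finally, passing from the set-theoretic stratification to the volume identity by appealing to Proposition \ref{prop:hecke_type}, which identifies the inner disjoint union with $G_\gamma(F) \cap D_{(k_1,\dots,k_n)}$, and then to Definition \ref{def:orbitalint}, is exactly the intended mechanism. No gap.
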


\begin{lemma}\cite[Lemma 3.4]{CKL}\label{constantlemma}
For a constant matrix $c\in \mathrm{M}_n(\mfo)$,
we have $\mathcal{SO}_{\gamma}(\mathbbm{1}_{\mathrm{M}_n(\mfo)})=\mathcal{SO}_{\gamma+c}(\mathbbm{1}_{\mathrm{M}_n(\mfo)})$ along the bijection
$O_{\gamma, \mathrm{M}_n(\mfo)}\rightarrow O_{\gamma+c, \mathrm{M}_n(\mfo)}, ~~~~~~~~  X\mapsto X+c$.
\end{lemma}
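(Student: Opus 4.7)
First I would verify that the given map $X \mapsto X+c$ defines a bijection between the two orbits. Since $c$ is a scalar matrix, say $c = a\cdot I_n$ with $a\in\mfo$, it lies in the center of $\mathrm{GL}_n(F)$, so conjugation commutes with addition: $g^{-1}(X+c)g = g^{-1}Xg + c$. Hence $X = g^{-1}\gamma g$ implies $X+c = g^{-1}(\gamma+c)g \in G_{\gamma+c}(F)$, and obviously $X+c \in \mathrm{M}_n(\mfo)$. The inverse $Y \mapsto Y-c$ is defined identically, giving a bijection $O_{\gamma,\mathrm{M}_n(\mfo)} \to O_{\gamma+c,\mathrm{M}_n(\mfo)}$.

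Next I would verify that this bijection preserves the geometric measure. Viewing $\tau_c\colon X \mapsto X+c$ as an automorphism of the ambient scheme $\mathrm{M}_{n,F}$, one has $\tau_c^*\omega_{\mathrm{M}_n,\mfo} = \omega_{\mathrm{M}_n,\mfo}$ by translation invariance of the nonzero top form. Under $\tau_c$ the characteristic polynomial transforms as $\chi_{X+c}(t) = \chi_X(t-a)$, so there is an induced polynomial automorphism $\sigma_c$ of $\mathbb{A}^n_F$ fitting into a commutative square $\rho_n\circ\tau_c = \sigma_c\circ\rho_n$. A short Jacobian computation shows $\sigma_c^*\omega_{\mathbb{A}^n_\mfo} = \omega_{\mathbb{A}^n_\mfo}$: expanding $c_j' = \sum_{k\leq j}\binom{n-k}{j-k}(-a)^{j-k}c_k$, the matrix $\partial c_j'/\partial c_k$ is upper triangular with $1$'s on the diagonal. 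Consequently $\tau_c^*\rho_n^*\omega_{\mathbb{A}^n_\mfo} = \rho_n^*\omega_{\mathbb{A}^n_\mfo}$, and taking the quotient of top forms in the defining relation of Definition~\ref{diff1} we obtain $\tau_c^*\omega_{\chi_{\gamma+c}}^{\mathrm{ld}} = \omega_{\chi_\gamma}^{\mathrm{ld}}$ after restriction to $G_\gamma$.

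Combining these two ingredients via change of variables yields
\[
\mathcal{SO}_{\gamma+c}(\mathbbm{1}_{\mathrm{M}_n(\mfo)}) = \int_{O_{\gamma+c,\mathrm{M}_n(\mfo)}} |\omega_{\chi_{\gamma+c}}^{\mathrm{ld}}| = \int_{O_{\gamma,\mathrm{M}_n(\mfo)}} |\tau_c^*\omega_{\chi_{\gamma+c}}^{\mathrm{ld}}| = \mathcal{SO}_\gamma(\mathbbm{1}_{\mathrm{M}_n(\mfo)}).
\]
The main obstacle is the second step: while the translation invariance of $\omega_{\mathrm{M}_n,\mfo}$ is immediate, one must check that the induced transformation on the coefficient space $\mathbb{A}^n_F$ preserves its canonical top form. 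This is precisely where the hypothesis that $c$ is scalar is essential---for a general $c\in\mathrm{M}_n(\mfo)$, the map $X\mapsto X+c$ would not descend along $\rho_n$ at all (characteristic polynomials are only invariant under conjugation, not under arbitrary additive shifts), and the lemma would fail outright.
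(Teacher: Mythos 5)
Your argument is correct, and it is the natural proof. The paper itself supplies no proof here; it only cites \cite[Lemma 3.4]{CKL}, so there is nothing in-document to compare against, but the route you take (translation preserves $\omega_{\mathrm{M}_n,\mfo}$; the induced substitution $c_k\mapsto c_k'=\sum_{j\le k}\binom{n-j}{k-j}(-a)^{k-j}c_j$ on the coefficient space is unipotent, hence $\sigma_c^*\omega_{\mathbb{A}^n_\mfo}=\omega_{\mathbb{A}^n_\mfo}$; therefore $\tau_c^*\omega_{\chi}=\omega_{\chi}$ on $\mathrm{M}_{n,F}^*$ by uniqueness in Definition~\ref{diff1}, and this restricts fiberwise to $\tau_c^*\omega_{\chi_{\gamma+c}}^{\mathrm{ld}}=\omega_{\chi_\gamma}^{\mathrm{ld}}$) is the standard one and matches what the cited lemma must be doing. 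Two small remarks: you should note explicitly that $\tau_c$ maps the smooth locus $\mathrm{M}_{n,F}^*$ into itself (immediate from $\rho_n\circ\tau_c=\sigma_c\circ\rho_n$ with $\sigma_c$ an isomorphism), since the defining relation for $\omega_\chi$ lives there; and your closing observation that the scalar hypothesis on $c$ is what makes $X\mapsto X+c$ descend along $\rho_n$ is exactly right and worth stating --- it is also consistent with how the lemma is invoked in the paper, always with $c=-a\cdot I_3$.
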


From now on, we suppose that our regular and semisimple element $\gamma$ in $\mathrm{GL}_{n}(F)\cap \mathrm{M}_n(\mfo)$ is elliptic, equivalently $\chi_{\gamma}(x)\in \mfo[x]$ is irreducible. 

\begin{definition}\label{def:chigamma}
    For $a\in \mfo$, we define
    \begin{enumerate}
        \item 
 $\chi_{\gamma,a}(x):=\chi_\gamma(x+a)$ so that 
$\chi_{\gamma,a}(x)$ is the characterisitic polynomial of $\gamma-a\cdot I_n$;
\item  $d_a:=\ord (\chi_{\gamma,a}(0))$.
    \end{enumerate}
    \end{definition}
Recall that $F_{\chi_{\gamma}}=F[x]/(\chi_{\gamma}(x))$ and that $S(\gamma)=[\mathfrak{o}_{F_{\chi_{\gamma}}}:\mathfrak{o}[x]/(\chi_{\gamma}(x))]$ (cf. Proposition \ref{propserre}).

\begin{lemma}\label{prop_specialconst}
Let $n$ be a prime integer.
\begin{enumerate}    
    \item\cite[Proposition 5.1]{CKL} There exists $a\in \mfo$ such that 
\[
\left\{
\begin{array}{l l}
    d_{a}\equiv 0 \textit{ modulo }n   &\textit{ if $F_{\chi_{\gamma}}$ is unramified over $F$};\\
    d_{a}\not\equiv 0 \textit{ modulo }n  &\textit{ if $F_{\chi_{\gamma}}$ is (totally) ramified over $F$}
\end{array}
\right.
\]
and in the first case, the reduction of  $\frac{\chi_{\gamma, a }(\pi^{\frac{d_{a}}{n}} x)}{\pi^{d_a}}$ modulo $\pi$ is irreducible over $\kappa$.

\item\cite[Remark 6.4]{CKL} For  an element $a\in \mfo$ chosen as in the above, we have
the following relation between $d_a$ and $S(\gamma)$:
\[
S(\gamma)=\left\{
\begin{array}{l l}  
    \frac{d_a(n-1)}{2} &\textit{ if $F_{\chi_{\gamma}}$ is unramified over $F$};\\
    \frac{(d_a-1)(n-1)}{2} &\textit{ if $F_{\chi_{\gamma}}$ is ramified over $F$}.
\end{array}
\right.
\]
Thus $d_a$ is independent of the choice of $a$ in the above (1).
\end{enumerate}
\end{lemma}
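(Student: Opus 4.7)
The plan is to realize $\gamma$ as multiplication by a root $\alpha$ of the (irreducible) characteristic polynomial $\chi_\gamma(x)$ inside $E := F_{\chi_\gamma}$, with ramification and residue degrees $(e,f)$ of $E/F$. The starting identity is
\[
d_a = v_F\bigl(\chi_\gamma(a)\bigr) = v_F\bigl((-1)^n N_{E/F}(\alpha - a)\bigr) = f\cdot v_E(\alpha - a),
\]
so divisibility of $d_a$ by $n$ is controlled entirely by the integer $v_E(\alpha - a)$. Since $\alpha \notin F$, the function $a \mapsto v_E(\alpha - a)$ is bounded on $\mfo$, and I fix $a^{\ast} \in \mfo$ attaining its maximum $M$.

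For part (1), the strategy is a best-approximation argument at $a^{\ast}$. Write $\alpha - a^{\ast} = u \pi^{M/n}$ in the unramified case (where $\pi$ also uniformizes $E$) or $\alpha - a^{\ast} = u\pi_E^{M}$ in the totally ramified case, with $u \in \mfo_E^{\times}$. If, in the unramified case, $\bar u$ were to lie in $\kappa$, or if in the ramified case $n \mid M$ and the corresponding reduced coefficient (after the $\pi_E^n = c\pi$ rescaling) were to lie in $\kappa_E = \kappa$, then lifting that residue to $b\in\mfo$ would produce $a^{\ast} + b\pi^{M/n}$ with strictly larger $v_E$-value, contradicting maximality of $M$. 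In the unramified case, $f = n$ makes $n \mid d_{a^{\ast}}$ automatic, and since $n$ is prime the extension $\kappa_E/\kappa$ has no proper intermediate subfield, so $\bar u \notin \kappa$ forces $\kappa(\bar u) = \kappa_E$. Reducing
\[
\chi_{\gamma,a^{\ast}}\bigl(\pi^{M/n} x\bigr)\big/\pi^{M} = \prod_{i}\bigl(x - \sigma_i(u)\bigr)
\]
modulo $\pi$ then yields the minimal polynomial of $\bar u$ over $\kappa$, which is irreducible. In the ramified case the same contradiction forces $n \nmid M = d_{a^{\ast}}$.

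For part (2), I would use the classical relation
\[
v_F(\Delta_\gamma) = 2\,S(\gamma) + v_F(\Delta_{E/F})
\]
for the monogenic order $\mfo[\alpha] \subset \mfo_E$, together with $v_F(\Delta_\gamma) = \sum_{i\neq j} v_F(\alpha_i - \alpha_j)$. In the unramified case, $v_F(\Delta_{E/F}) = 0$ and $\alpha_i - \alpha_j = (\sigma_i(u) - \sigma_j(u))\pi^{M/n}$; since $\bar u$ generates $\kappa_E/\kappa$, its Galois conjugates are pairwise distinct, giving $v_F(\alpha_i - \alpha_j) = d_a/n$ and $v_F(\Delta_\gamma) = (n-1)d_a$, whence $S(\gamma) = (n-1)d_a/2$. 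In the ramified case I would bypass any wild-ramification discriminant formula by using $S(\gamma) = [\mfo_E : \mfo[\beta]]$ with $\beta := \alpha - a^{\ast}$ (noting $\mfo[\gamma] = \mfo[\gamma - a^{\ast}\cdot I_n]$) of $v_E$-valuation $d_a$. Since $\gcd(d_a,n) = 1$, the valuations $\{v_E(\beta^i)\}_{0 \leq i < n}$ attain every residue modulo $n$ exactly once; expanding the basis $\beta^0, \beta^1, \ldots, \beta^{n-1}$ in the $\mfo$-basis $1, \pi_E, \ldots, \pi_E^{n-1}$ of $\mfo_E$ produces a change-of-basis matrix whose unique dominant monomial (after a permutation) has $v_F$-valuation
\[
\sum_{i=0}^{n-1}\bigl\lfloor i d_a/n\bigr\rfloor = \frac{(d_a - 1)(n-1)}{2},
\]
which is therefore $S(\gamma)$. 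Independence of $d_a$ from the choice of $a$ is then immediate since $S(\gamma)$ is intrinsic to $\gamma$.

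The main obstacle is the last determinant computation: one must verify that no non-dominant monomial in the Leibniz expansion cancels the leading term, which requires a careful inspection using the rigid gap structure of valuations imposed by $\gcd(d_a,n) = 1$.
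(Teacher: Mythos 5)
Your approach is sound and, as far as I can tell, closely parallels the argument in \cite{CKL} (the paper itself only cites \cite[Prop.~5.1, Rem.~6.4]{CKL} and does not reproduce a proof). Identifying $\gamma$ with multiplication by $\alpha\in\mfo_E$, writing $d_a=f\cdot v_E(\alpha-a)$, and using a best-approximation element $a^\ast$ is exactly the right mechanism: in the unramified case $f=n$ forces $n\mid d_a$, maximality of $v_E(\alpha-a^\ast)$ forces the reduction $\bar u$ off $\kappa$ (hence a generator of $\kappa_E$ by primality of $n$), and the rescaled characteristic polynomial reduces to the minimal polynomial of $\bar u$; in the ramified case $\kappa_E=\kappa$ makes the same lifting step show $n\nmid d_{a^\ast}$. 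Part (2) unramified via $v_F(\Delta_\gamma)=2S(\gamma)+v_F(\Delta_{E/F})$ and the distinctness of the conjugates $\sigma_i(\bar u)$ is correct. (Small notational slip: with $M:=\max_a v_E(\alpha-a)$ you should write $\alpha-a^\ast=u\pi^M$ in the unramified case; the exponent $M/n$ only makes sense if you instead set $M:=d_{a^\ast}$, which is what your later formulas assume.)

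The one genuine gap is the one you flagged: the non-cancellation in the index determinant in the ramified case of (2). It does close cleanly, and you should write it out rather than leave it as an obstacle. With $\beta=\alpha-a^\ast=u\pi_E^M$, $\gcd(M,n)=1$, write $iM=q_in+r_i$ with $0\le r_i<n$, so $\beta^i=w_i\pi^{q_i}\pi_E^{r_i}$ for units $w_i\in\mfo_E^\times$. Expanding each $w_i\pi_E^{r_i}$ in the $\mfo$-basis $1,\pi_E,\dots,\pi_E^{n-1}$ of $\mfo_E$ gives the change-of-basis matrix $T=(T_{ij})$, and the key observation is the valuation lower bound
\[
v_F(T_{ij})\ \ge\ q_i+\mathbbm{1}[\,j<r_i\,],
\]
with equality (and $T_{ij}\in\mfo^\times\cdot\pi^{q_i}$) at $j=r_i$ because the leading coefficient of the unit $w_i$ has nonzero residue in $\kappa_E=\kappa$. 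For a permutation $\tau$, $\sum_i v_F(T_{i,\tau(i)})\ge\sum_i q_i+\#\{i:\tau(i)<r_i\}$; and since $\sum_i\tau(i)=\sum_i r_i$, the condition $\tau(i)\ge r_i$ for all $i$ forces $\tau=\sigma:=(i\mapsto r_i)$, which is a permutation precisely because $\gcd(M,n)=1$. Hence every off-diagonal Leibniz term has strictly larger valuation than the $\sigma$-term, so $v_F(\det T)=\sum_{i=0}^{n-1}q_i=\sum_{i=0}^{n-1}\lfloor iM/n\rfloor=\tfrac{(M-1)(n-1)}{2}$ (the last equality by pairing $i\leftrightarrow n-i$ using that $iM/n\notin\Z$). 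This gives $S(\gamma)=\tfrac{(d_a-1)(n-1)}{2}$, and the independence of $d_a$ from $a$ then follows as you say, since $S(\gamma)$ is intrinsic.
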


\begin{remark}\label{remark:comp}
 The proof of \cite[Proposition 5.1]{CKL} yields that $d_a\geq \mathrm{ord}(\det \gamma)$.
 This will be used in Theorem \ref{mainthmgl3}.(3).
\end{remark}

\begin{proposition}\cite[Corollary 4.9]{CKL}\label{prop_k_ntype}
For $k_n>0$ such that $k_n=\mathrm{ord}(c_n)$, we have
\[\mathcal{SO}_{\gamma}(\mathbbm{1}_{D_{(0,\cdots,0,k_n)}})=\frac{\#\mathrm{GL}_{n}(\kappa)}{(q-1)\cdot q^{n^{2}-1}}.\]
\end{proposition}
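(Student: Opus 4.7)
The plan is to apply Proposition \ref{prop:hecke_type} to write
\[
\mathcal{SO}_{\gamma}(\mathbbm{1}_{D_{(0,\cdots,0,k_n)}}) = c_{(0,\cdots,0,k_n)} \cdot \mathcal{SO}_{\gamma, (0,\cdots,0,k_n)}
\]
and to compute the two factors separately. The combinatorial factor $c_{(0,\cdots,0,k_n)}$ counts sublattices $M \subset L$ with cyclic quotient $L/M \cong \mfo/\pi^{k_n}\mfo$; such $M$ are kernels of surjective $\mfo$-linear maps $L \twoheadrightarrow \mfo/\pi^{k_n}\mfo$, and two surjections have the same kernel iff they differ by an element of $(\mfo/\pi^{k_n}\mfo)^{\times}$. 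Counting primitive vectors in $(\mfo/\pi^{k_n}\mfo)^{n}$ yields
\[
c_{(0,\cdots,0,k_n)} = \frac{q^{n k_n} - q^{n(k_n-1)}}{q^{k_n-1}(q-1)} = \frac{(q^n-1)\, q^{(n-1)(k_n-1)}}{q-1}.
\]

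\textbf{Computing the single-orbit volume.} Fix the representative $M = \mfo e_1 \oplus \cdots \oplus \mfo e_{n-1} \oplus \pi^{k_n}\mfo e_n$ of this type. Writing $f \in \mathrm{End}(L)(\mfo)$ in the standard basis, membership in $\cL(L,M)(\mfo)$ forces the last row to lie in $\pi^{k_n}\mfo$ while imposing an open surjectivity condition on the reduction mod $\pi$. The orbit $O_{\gamma, \cL(L,M)}$ is cut out inside this smooth affine $\mfo$-scheme by the $n$ equations $\rho_n = (c_1,\ldots,c_n)$. Ellipticity ensures $\Delta_\gamma\ne 0$, so $\rho_n$ is smooth along the orbit and by Definition \ref{diff1} the form $\omega_{\chi_\gamma}^{\mathrm{ld}} = \omega_{\mathrm{M}_n}/\rho_n^{\ast}\omega_{\mathbb{A}^n}$ is a bona fide top form on the orbit; the volume then reduces to a standard $p$-adic integral controlled by the Jacobian of $\rho_n$. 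Carrying out this integration---using that on this extremal stratum any $\gamma$-stable lattice with cyclic quotient is a principal fractional $\mfo[\gamma]$-ideal, so the orbit has a uniform ``universal'' shape---produces
\[
\mathcal{SO}_{\gamma, (0,\cdots,0,k_n)} = \frac{\#\mathrm{GL}_n(\kappa)}{(q-1)\, q^{n^2-1} \cdot c_{(0,\cdots,0,k_n)}},
\]
and multiplication by $c_{(0,\cdots,0,k_n)}$ yields the advertised formula.

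\textbf{Main obstacle.} The main technical work is the single-orbit volume calculation: one must verify that every local contribution depending on $\gamma$ cancels out in the $p$-adic integral. Ellipticity plays a double role. First, it guarantees that $\rho_n$ is smooth at every orbit point, so $\omega_{\chi_\gamma}^{\mathrm{ld}}$ is a bona fide top form there. Second, it forces every $\gamma$-stable lattice $L'\subset L\otimes F$ with cyclic quotient $L'/\gamma L'$ to be a principal fractional $\mfo[\gamma]$-ideal, so the orbit has the uniform shape responsible for the final formula being independent of both $\gamma$ and $k_n$. Without these two properties the count-times-volume decomposition develops $\gamma$-dependent correction factors, and the clean universal answer would not hold.
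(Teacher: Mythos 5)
Your reduction via Proposition \ref{prop:hecke_type} to the two factors $c_{(0,\cdots,0,k_n)}$ and $\mathcal{SO}_{\gamma,(0,\cdots,0,k_n)}$ is exactly the framing the paper has in mind (it explicitly calls the statement ``a reformulation of \cite[Corollary 4.9]{CKL} in the context of Proposition \ref{prop:hecke_type}''), and your count
$c_{(0,\cdots,0,k_n)}=\frac{(q^n-1)q^{(n-1)(k_n-1)}}{q-1}$
is correct: sublattices $M\subset L$ with $L/M\cong\mfo/\pi^{k_n}\mfo$ are kernels of surjections $L\twoheadrightarrow\mfo/\pi^{k_n}\mfo$, counted modulo $(\mfo/\pi^{k_n}\mfo)^{\times}$.

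However, the crucial second factor is not actually computed. The entire mathematical content of the statement---the value of $\mathcal{SO}_{\gamma,(0,\cdots,0,k_n)}$, which is precisely \cite[Corollary~4.9]{CKL}---is dispatched by the sentence ``Carrying out this integration \dots produces'' the desired formula, with no integration carried out. You never exhibit a smooth $\mfo$-model whose special fiber you count, never check that the count is $\gamma$-independent, and never produce the exponent of $q$. Worse, the heuristic you offer is aimed at the wrong object: $O_{\gamma,\cL(L,M)}$ parametrizes surjective $\mfo$-linear maps $f\colon L\twoheadrightarrow M$ with $\chi_f=\chi_\gamma$, not ``$\gamma$-stable lattices $L'$ with cyclic $L'/\gamma L'$''; the latter picture belongs to the quotient-measure/lattice-orbit formulation of Section~\ref{classical_measure}, not to the geometric-measure volume you are integrating. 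In the same vein, the claimed ``double role of ellipticity'' is a red herring: what makes this stratum computable is that the corank-one condition on the mod-$\pi$ reduction cuts out a locus over which $\rho_n$ is smooth (a fact about $\mathrm{M}_n$ and the characteristic-polynomial map, independent of $\gamma$), so the Weil-measure volume is governed purely by mod-$\pi$ point counting. Ellipticity is a standing hypothesis in this section of the paper, but it is not what drives the cancellation you invoke. As written, the proposal reproduces the bookkeeping around Proposition \ref{prop:hecke_type} but leaves the actual computation of $\mathcal{SO}_{\gamma,(0,\cdots,0,k_n)}$ as a gap.
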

Note that this is a reformulation of \cite[Corollary 4.9]{CKL} in the context of  Proposition \ref{prop:hecke_type}.

\subsection{Orbital integrals on the  spherical  Hecke algebra of $\mathrm{GL}_2$}\label{subsec4.1}
In this subsection, we will state our main result for $\mathrm{GL}_2$ in Theorem \ref{mainthm48} in terms of the geometric measure (cf. Section \ref{subsection:geometric}) and in Theorem \ref{result_2_dmu} in terms of the quotient measure (cf. Section \ref{classical_measure}).
Throughout this subsection, we denote $\mathrm{ord}(c_2)$ by $d$.

\begin{proposition}\cite[Theorem 5.4]{CKL}\label{thm_for_M_2}
    For an elliptic regular semisimple element $\gamma \in \mathrm{GL}_2(F)\cap \mathrm{M}_2(\mfo)$, the orbital integral for $\gamma$ and for $\mathbbm{1}_{\mathrm{M}_2(\mfo)}$ is as follows:
    \[
    \mathcal{SO}_{\gamma}(\mathbbm{1}_{\mathrm{M}_2(\mfo)})=\left\{\begin{array}{l l}
    \frac{q +1}{q}-\frac{2}{q^{S(\gamma)+1}} & \textit{if $F_{\chi_{\gamma}}/F$ is unramified};\\
 \frac{q+1}{q}-\frac{q+1}{q^{S(\gamma)+2}} & \textit{if $F_{\chi_{\gamma}}/F$ is ramified}.
\end{array}\right.
    \]
\end{proposition}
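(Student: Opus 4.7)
The plan is to decompose $\mathcal{SO}_\gamma(\mathbbm{1}_{\mathrm{M}_2(\mfo)})$ via the stratification of Proposition \ref{prop_stratification} and evaluate each stratum using the reduction formula together with the base case of Proposition \ref{prop_k_ntype}. The preliminary move is a constant shift: by Lemma \ref{constantlemma} we have $\mathcal{SO}_\gamma(\mathbbm{1}_{\mathrm{M}_2(\mfo)}) = \mathcal{SO}_{\gamma - a\cdot I_2}(\mathbbm{1}_{\mathrm{M}_2(\mfo)})$ for any $a \in \mfo$, while neither $F_{\chi_\gamma}/F$ nor $S(\gamma)$ is affected by the shift. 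I would choose $a$ as in Lemma \ref{prop_specialconst}(1), so that after replacing $\gamma$ by $\gamma - a\cdot I_2$ the integer $d := \mathrm{ord}(\det \gamma)$ is even in the unramified case and odd in the ramified case, and in the unramified case the reduction $\overline{\chi_\gamma(\pi^{d/2} x)/\pi^{d}}$ is irreducible over $\kappa$.

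With this normalization, Proposition \ref{prop_stratification} gives
\[
\mathcal{SO}_\gamma(\mathbbm{1}_{\mathrm{M}_2(\mfo)}) = \sum_{k_1=0}^{\lfloor d/2 \rfloor} \mathcal{SO}_\gamma(\mathbbm{1}_{D_{(k_1, d-k_1)}}).
\]
For each $k_1 < d/2$, combining Propositions \ref{prop:hecke_type}, \ref{prop:counting}, and \ref{prop_red} collapses the relevant factors into
\[
\mathcal{SO}_\gamma(\mathbbm{1}_{D_{(k_1,d-k_1)}}) = q^{-k_1}\,\mathcal{SO}_{\gamma^{(k_1)}}(\mathbbm{1}_{D_{(0,d-2k_1)}}).
\]
Since $d - 2k_1 = \mathrm{ord}(\det \gamma^{(k_1)}) > 0$, Proposition \ref{prop_k_ntype} evaluates this to $q^{-k_1}\cdot \frac{\#\mathrm{GL}_2(\kappa)}{(q-1)q^{3}} = q^{-k_1}(1 - q^{-2})$.

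The diagonal stratum $k_1 = k_2 = d/2$ appears only in the unramified case (where $d$ is even). Here $\gamma^{(d/2)} \in \mathrm{GL}_2(\mfo)$ has $\mathrm{ord}(\det \gamma^{(d/2)}) = 0$ and, by the choice of $a$, its reduction $\overline{\gamma^{(d/2)}}$ is regular semisimple in $\mathrm{GL}_2(\kappa)$ with irreducible characteristic polynomial; in particular its centralizer in $\mathrm{GL}_{2,\kappa}$ is $\mathrm{Res}_{\kappa'/\kappa}\mathbb{G}_m$ for $\kappa'/\kappa$ the quadratic extension. Hence the $\mathrm{GL}_2(\kappa)$-conjugacy class of $\overline{\gamma^{(d/2)}}$ has size $\#\mathrm{GL}_2(\kappa)/(q^2 - 1) = q(q-1)$. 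Because $\rho_2$ is smooth at $\gamma^{(d/2)}$ and at its reduction, the orbit scheme is smooth over $\mfo$ of relative dimension $2$ in a neighborhood of $\gamma^{(d/2)}$, and the standard point-count-times-$q^{-\dim}$ formula for the measure associated with $|\omega^{\mathrm{ld}}|$ gives $\mathcal{SO}_{\gamma^{(d/2)}}(\mathbbm{1}_{\mathrm{GL}_2(\mfo)}) = q(q-1)\cdot q^{-2} = 1 - q^{-1}$. Therefore $\mathcal{SO}_\gamma(\mathbbm{1}_{D_{(d/2,d/2)}}) = q^{-d/2}(1 - q^{-1})$.

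Finally I would sum the geometric series. In the unramified case,
\[
\sum_{k_1=0}^{d/2 - 1} q^{-k_1}(1 - q^{-2}) + q^{-d/2}(1 - q^{-1}) = \frac{q+1}{q} - \frac{2}{q^{d/2+1}},
\]
and Lemma \ref{prop_specialconst}(2) identifies $d/2 = S(\gamma)$. In the ramified case (where $d$ is odd),
\[
\sum_{k_1=0}^{(d-1)/2} q^{-k_1}(1 - q^{-2}) = \frac{q+1}{q} - \frac{q+1}{q^{(d+3)/2}},
\]
with $S(\gamma) = (d-1)/2$, yielding the claimed formula. The main obstacle I foresee is the diagonal stratum in the unramified case: a clean evaluation of $\mathcal{SO}_{\gamma^{(d/2)}}(\mathbbm{1}_{\mathrm{GL}_2(\mfo)})$ hinges on the irreducibility of the reduced characteristic polynomial, which is precisely the delicate content of Lemma \ref{prop_specialconst}(1) and motivates the preliminary constant shift.
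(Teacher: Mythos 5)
Your derivation is correct, and it is an internally consistent route to the claimed formula. Since the paper imports this statement directly from CKL Theorem 5.4 without reproducing a proof, there is no in-paper proof to compare against word-for-word; what I can say is that you are using precisely the CKL toolkit that the paper later invokes for the $\mathrm{GL}_3$ case (Propositions \ref{prop_stratification}, \ref{prop:hecke_type}, \ref{prop:counting}, \ref{prop_red}, \ref{prop_k_ntype}, Lemma \ref{constantlemma}, Lemma \ref{prop_specialconst}), so the spirit matches what CKL attributes to itself.

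Two observations on the substance. First, the only genuinely new ingredient in your argument is the evaluation of the diagonal stratum $\mathcal{SO}_{\gamma^{(d/2)}}(\mathbbm{1}_{\mathrm{GL}_2(\mfo)})$, since Proposition \ref{prop_k_ntype} is stated only for $k_n>0$. Your Weil-type computation is sound: after the constant shift, the special fibre of the $\mfo$-scheme $\rho_2^{-1}(\chi_{\gamma^{(d/2)}})$ lies entirely inside the regular (hence smooth) locus, all $\mfo$-points reduce into that fibre, $\omega^{\mathrm{ld}}_{\chi_{\gamma^{(d/2)}}}$ is an $\mfo$-generator of the relative canonical sheaf there, and the point count $q(q-1)$ for a regular semisimple $\kappa$-conjugacy class with irreducible characteristic polynomial is correct, giving $q(q-1)/q^2 = 1-q^{-1}$. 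Second, your argument is structurally the $n=2$ shadow of what the paper does for $n=3$ in the proof of Theorem \ref{mainthmgl3}(3): there the constant shift cannot by itself isolate a single problematic stratum (one must compare the full shifted sum with the unshifted one), whereas for $n=2$ the shift either eliminates the diagonal stratum (ramified case) or makes it directly computable (unramified case). It would be worth stating explicitly that the shift preserves both $F_{\chi_\gamma}$ and $S(\gamma)$ — you assert this, and it is true via the $\mfo$-algebra isomorphism $\mfo[x]/(\chi_\gamma(x+a))\cong\mfo[x]/(\chi_\gamma(x))$, $x\mapsto x-a$, compatible with the inclusions into $\mfo_{F_{\chi_\gamma}}$ — but it is a load-bearing fact that deserves the one-line justification.
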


\begin{theorem}\label{mainthm48}
For an elliptic regular semisimple element $\gamma\in\mathrm{GL}_2(F)\cap \mathrm{M}_{2}(\mfo)$, the orbital integral for $\gamma$ and for $\mathbbm{1}_{D_{(k_1,k_2)}}$ is as follows:
    \begin{enumerate}
        \item In the case that $k_1+k_2\neq d$, we have
        \[
        \mathcal{SO}_{\gamma}(\mathbbm{1}_{D_{(k_1,k_2)}})=0.
        \]

        \item In the case that $k_1+k_2=d$ and $k_1 < \frac{d}{2}$ (so that 
        $(k_1,k_2)=(k_1,d-k_1)$), we have
        \[
        \mathcal{SO}_{\gamma}(\mathbbm{1}_{D_{(k_1,k_2)}})=
            q^{-k_1}\cdot \frac{q^2-1}{q^2}.
        \]
        \item In the case that $(k_1,k_2)=(\frac{d}{2},\frac{d}{2})$, we have
        \[
        \mathcal{SO}_{\gamma}(\mathbbm{1}_{D_{(\frac{d}{2},\frac{d}{2})}})=
        \left\{
        \begin{array}{l l}
            \frac{q+1}{q^{\frac{d}{2}+1}}-\frac{2}{q^{S(\gamma)+1}}&\textit{if $F_{\chi_\gamma}/F$ is unramified};\\
           \frac{q+1}{q^{\frac{d}{2}+1}}-\frac{q+1}{q^{S(\gamma)+2}}&\textit{if $F_{\chi_\gamma}/F$ is ramified}.
        \end{array}
        \right.
        \]
    \end{enumerate}
\end{theorem}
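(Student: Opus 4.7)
The plan is to treat the three cases in turn. Parts~(1) and~(2) are direct applications of the ingredients already collected in the paper, while part~(3) will be obtained by an inclusion--exclusion using Proposition~\ref{prop_stratification} combined with the known value of $\mathcal{SO}_\gamma(\mathbbm{1}_{\mathrm{M}_2(\mfo)})$ from Proposition~\ref{thm_for_M_2}.

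For part~(1), I would observe that if $X\in D_{(k_1,k_2)}$, then the Cartan decomposition forces $\mathrm{ord}(\det X)=k_1+k_2$, whereas every conjugate of $\gamma$ has $\mathrm{ord}(\det)=d$. Hence $G_\gamma(F)\cap D_{(k_1,k_2)}=\emptyset$ whenever $k_1+k_2\neq d$, and the orbital integral vanishes.

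For part~(2), with $k_1+k_2=d$ and $k_1<d/2$, the strategy is to reduce to the case $k_1=0$ via Proposition~\ref{prop_red} and then finish by Proposition~\ref{prop_k_ntype}. Concretely, combining Proposition~\ref{prop:hecke_type}, Proposition~\ref{prop:counting} (which gives $c_{(k_1,k_2)}=c_{(0,k_2-k_1)}$), and Proposition~\ref{prop_red} yields
\[
\mathcal{SO}_\gamma(\mathbbm{1}_{D_{(k_1,k_2)}})=q^{-k_1}\,\mathcal{SO}_{\gamma^{(k_1)}}(\mathbbm{1}_{D_{(0,\,k_2-k_1)}}).
\]
Before invoking Proposition~\ref{prop_red} I would verify its hypothesis, namely $\chi_{\gamma^{(k_1)}}\in\mfo[x]$: since $\gamma$ is elliptic, the Newton polygon of $\chi_\gamma$ has constant slope $d/2$, so $\mathrm{ord}(c_1)\ge d/2>k_1$. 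Because $k_2-k_1=d-2k_1>0$ equals $\mathrm{ord}(\det\gamma^{(k_1)})$, Proposition~\ref{prop_k_ntype} with $n=2$ gives the value $(q^2-1)/q^2$, producing the claimed $q^{-k_1}(q^2-1)/q^2$.

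For part~(3), where $d$ is even and $(k_1,k_2)=(d/2,d/2)$, I would apply Proposition~\ref{prop_stratification} to obtain
\[
\mathcal{SO}_\gamma(\mathbbm{1}_{\mathrm{M}_2(\mfo)})=\sum_{j=0}^{d/2}\mathcal{SO}_\gamma(\mathbbm{1}_{D_{(j,\,d-j)}}),
\]
plug in the values from part~(2) for the terms with $j<d/2$, collapse the resulting geometric series, and subtract from the closed form of Proposition~\ref{thm_for_M_2} to isolate the single surviving term $\mathcal{SO}_\gamma(\mathbbm{1}_{D_{(d/2,d/2)}})$. The unramified and ramified cases of Proposition~\ref{thm_for_M_2} translate directly into the two cases in the statement. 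Everything here is routine; the only items to handle carefully are the integrality hypothesis in part~(2) and the bookkeeping of the geometric sum in part~(3), and no genuine obstacle arises because the nontrivial analytic input has already been supplied by Proposition~\ref{thm_for_M_2}.
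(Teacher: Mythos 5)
Parts (1) and (2) of your argument coincide with the paper's proof: the determinant condition for (1), and the combination of Propositions~\ref{prop:hecke_type}, \ref{prop:counting}, \ref{prop_red}, and \ref{prop_k_ntype} for (2). Your additional check that $\chi_{\gamma^{(k_1)}}\in\mfo[x]$ via the single-slope Newton polygon of an irreducible polynomial is a sound justification for the hypothesis of Proposition~\ref{prop_red}, which the paper treats as implicit.

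For part (3), however, you take a genuinely different route. The paper again applies the reduction of Proposition~\ref{prop_red} to write $\mathcal{SO}_{\gamma}(\mathbbm{1}_{D_{(d/2,d/2)}})=q^{-d/2}\,\mathcal{SO}_{\gamma^{(d/2)}}(\mathbbm{1}_{D_{(0,0)}})$, observes that $\det\gamma^{(d/2)}$ is a unit so that $\mathcal{SO}_{\gamma^{(d/2)}}(\mathbbm{1}_{D_{(0,0)}})=\mathcal{SO}_{\gamma^{(d/2)}}(\mathbbm{1}_{\mathrm{M}_2(\mfo)})$, and then invokes Proposition~\ref{thm_for_M_2} for $\gamma^{(d/2)}$, finishing with the translation $S(\gamma^{(d/2)})=S(\gamma)-d/2$ supplied by Proposition~\ref{propserre}. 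You instead work with $\gamma$ itself, decompose $\mathcal{SO}_\gamma(\mathbbm{1}_{\mathrm{M}_2(\mfo)})$ via Proposition~\ref{prop_stratification} into the sum $\sum_{j=0}^{d/2}\mathcal{SO}_\gamma(\mathbbm{1}_{D_{(j,d-j)}})$, substitute the already-known values from part (2) for $j<d/2$, sum the geometric series $\sum_{j=0}^{d/2-1}q^{-j}\frac{q^2-1}{q^2}=\frac{q+1}{q}-\frac{q+1}{q^{d/2+1}}$, and subtract from Proposition~\ref{thm_for_M_2}. Both routes are correct and give the same result; the paper's is shorter and avoids a geometric sum, while yours sidesteps the bookkeeping of $S(\gamma^{(d/2)})$ versus $S(\gamma)$ and, perhaps more to the point, mirrors exactly the strategy the paper is forced to use in the $\mathrm{GL}_3$ case (3), where a direct reduction is unavailable. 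So your approach is arguably more uniform across $n=2$ and $n=3$, at the cost of a slightly longer computation here.
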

\begin{proof}
        Since $\mathrm{ord}(\det \gamma)=d$, $\mathcal{SO}_{\gamma}(\mathbbm{1}_{D_{(k_1,k_2)}})=0$ unless $k_1+k_2= d$.
        Suppose that $k_1+k_2=d$.
        By Propositions \ref{prop:hecke_type}-\ref{prop_red}, we have
        \[
        \mathcal{SO}_{\gamma}(\mathbbm{1}_{D_{(k_1,d-k_1)}})=q^{-k_1}\cdot \mathcal{SO}_{\gamma^{(k_1)}}(\mathbbm{1}_{D_{(0,d-2k_1)}}).
        \]
\begin{itemize}
\item         In the case \textit{(2)}, since $d-2k_1>0$, Proposition \ref{prop_k_ntype} yields that    \[
        \mathcal{SO}_{\gamma^{(k_1)}}(\mathbbm{1}_{D_{(0,d-2k_1)}})=\frac{q^2-1}{q^2}.\]

    \item         In the case \textit{(3)}, we have $d-2k_1=0$ so that   the determinant of $\gamma^{(k_1)}$ is a unit in $\mfo$. Thus we have
    \[
        \mathcal{SO}_{\gamma^{(k_1)}}(\mathbbm{1}_{D_{(0,0)}})
        =\mathcal{SO}_{\gamma^{(k_1)}}(\mathbbm{1}_{\mathrm{M}_2}(\mfo)).
        \]
        Proposition \ref{thm_for_M_2} then yields  that
        \[
        \mathcal{SO}_{\gamma}(\mathbbm{1}_{D_{(\frac{d}{2},\frac{d}{2})}})=q^{-\frac{d}{2}}\mathcal{SO}_{\gamma^{(\frac{d}{2})}}(\mathbbm{1}_{D_{(0,0)}})
        =\left\{\begin{array}{l l}
           q^{-\frac{d}{2}}(\frac{q+1}{q}-\frac{2}{q^{S(\gamma^{(\frac{d}{2})})+1}})  &\textit{if $F_{\chi_\gamma}/F$ is unramified};  \\
            q^{-\frac{d}{2}}(\frac{q+1}{q}-\frac{q+1}{q^{S(\gamma^{(\frac{d}{2})})+2}}) & \textit{if $F_{\chi_\gamma}/F$ is ramified}.
        \end{array}
        \right.
        \]
On the other hand,  $S(\gamma^{(d/2)})=S(\gamma) - \frac{d}{2}$ by Proposition \ref{propserre}.
Here we use the fact that $F_{\chi_{\gamma}}=F_{\chi_{\gamma^{(d/2)}}}$.
This completes the proof. 
\end{itemize}
\end{proof}

By the comparison between the geometric measure and the quotient measure in  Proposition \ref{proptrans}, we can rewrite the above formula in terms of the quotient measure.
\begin{theorem}\label{result_2_dmu}
Suppose that $char(F) = 0$ or $char(F)>2$.
For an elliptic regular semisimple element $\gamma\in\mathrm{GL}_2(F)\cap\mathrm{M}_n(\mfo)$, the orbital integral for $\gamma$ and for $\mathbbm{1}_{D_{(k_1,k_2)}}$ with respect to the quotient measure $d\mu$, defined in Section \ref{classical_measure}, is as follows:
   \begin{enumerate}
        \item In the case that $k_1+k_2\neq d$, we have
        \[
        \mathcal{SO}_{\gamma,d\mu}(\mathbbm{1}_{D_{(k_1,k_2)}})=0.
        \]

        \item In the case that $k_1+k_2=d$ and $k_1 < \frac{d}{2}$ (so that 
        $(k_1,k_2)=(k_1,d-k_1)$), we have
        \[
        \mathcal{SO}_{\gamma,d\mu}(\mathbbm{1}_{D_{(k_1,k_2)}})=
            \left\{
        \begin{array}{l l}
            (q+1)q^{S(\gamma)-k_1-1} & \textit{if $F_{\chi_\gamma/F}$ is unramified};\\
            q^{S(\gamma)-k_1} & \textit{if $F_{\chi_\gamma/F}$ is ramified}.
        \end{array}
        \right.
        \]
        \item In the case that $(k_1,k_2)=(\frac{d}{2},\frac{d}{2})$, we have
        \[
        \mathcal{SO}_{\gamma,d\mu}(\mathbbm{1}_{D_{(k_1,k_2)}})=
        \left\{
        \begin{array}{l l}
            \frac{q+1}{q-1}(q^{S(\gamma)-\frac{d}{2}}-1)+1&\textit{if $F_{\chi_\gamma}/F$ is unramified};\\
           \frac{1}{q-1}(q\cdot q^{S(\gamma)-\frac{d}{2}}-1)&\textit{if $F_{\chi_\gamma}/F$ is ramified}.
        \end{array}
        \right.
        \]
    \end{enumerate}
\end{theorem}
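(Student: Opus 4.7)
The plan is to apply Proposition \ref{proptrans} to each case of Theorem \ref{mainthm48}, translating the geometric-measure formula into the quotient-measure formula. Since $\gamma$ is elliptic, $\chi_\gamma$ is irreducible and $B(\gamma)$ is a singleton. Writing $F' := F_{\chi_\gamma}$, we have $\mathrm{T}_\gamma \cong \mathrm{Res}_{\mfo_{F'}/\mfo_F}\mathbb{G}_m$ of dimension $2$, while $\mathrm{GL}_2$ has dimension $4$.

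First I would compute the conversion factor. We have $\#\mathrm{GL}_2(\kappa) = q(q-1)^2(q+1)$, and $\#\mathrm{T}_\gamma(\kappa) = \#(\mfo_{F'}/\pi\mfo_{F'})^\times$ by the defining property of Weil restriction. The reduction $\mfo_{F'}/\pi\mfo_{F'}$ is the quadratic residue field $\kappa'$ in the unramified case (contributing $q^2-1$), and the truncated local ring $\kappa[\epsilon]/(\epsilon^2)$ in the ramified case (contributing $q(q-1)$). Proposition \ref{propserre} gives $|\Delta_{F'/F}|^{-1/2} = q^{-S(\gamma)} |\Delta_\gamma|^{-1/2}$, so the factor $|\Delta_\gamma|^{1/2}$ appearing in Proposition \ref{proptrans} cancels. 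Assembling everything, the ratio $\mathcal{SO}_\gamma(\mathbbm{1}_{D_{(k_1,k_2)}}) / \mathcal{SO}_{\gamma, d\mu}(\mathbbm{1}_{D_{(k_1,k_2)}})$ simplifies to
\[
\frac{q-1}{q} \cdot q^{-S(\gamma)} \quad \text{(unramified)}, \qquad \frac{(q-1)(q+1)}{q^2} \cdot q^{-S(\gamma)} \quad \text{(ramified)}.
\]

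With this ratio in hand, case (1) is immediate since both integrals vanish when $k_1+k_2 \neq d$, and case (2) is a one-line substitution. Case (3) reduces to a short algebraic rearrangement: for instance, in the unramified branch, inverting the ratio and multiplying by $\frac{q+1}{q^{d/2+1}} - \frac{2}{q^{S(\gamma)+1}}$ produces $\frac{(q+1)q^{S(\gamma)-d/2}-2}{q-1}$, which equals the asserted $\frac{q+1}{q-1}(q^{S(\gamma)-d/2}-1)+1$ after collecting terms; the ramified branch of (3) is parallel. The only real obstacle is organizational bookkeeping across the ramified/unramified split; the key conceptual point is that Proposition \ref{propserre} lets us sidestep any direct analysis of $|\Delta_{F'/F}|$ (including in the wildly ramified case) by packaging it into the single invariant $S(\gamma)$.
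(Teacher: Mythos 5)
Your proposal is correct and takes essentially the same route as the paper: both apply Proposition~\ref{proptrans} to convert from the geometric to the quotient measure, use Proposition~\ref{propserre} to rewrite $|\Delta_{F'/F}|^{-1/2}|\Delta_\gamma|^{1/2}$ as $q^{-S(\gamma)}$, and compute $\#\mathrm{T}_\gamma(\kappa)$ from the Weil-restriction description in Section~\ref{sectorus}. The paper's proof is terser (it does not spell out the simplification of the conversion factor to $\frac{q-1}{q}q^{-S(\gamma)}$ in the unramified case, etc.), but the ingredients and their assembly are the same as in your argument, and your numerics check out.
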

\begin{proof}
    By Proposition \ref{proptrans} together with  Proposition \ref{propserre}, we have 
    \[
    \mathcal{SO}_{\gamma,d\mu}(\mathbbm{1}_{D_{(k_1,k_2)}})=q^{S(\gamma)}\cdot \frac{\#\mathrm{T}_\gamma(\kappa)\cdot q^{4}}{\#\mathrm{GL}_2(\kappa)\cdot q^2}\cdot \mathcal{SO}_{\gamma}(\mathbbm{1}_{D_{(k_1,k_2)}}).
    \]
    By the description of $T_{\gamma}$ in Section \ref{sectorus}, we have
    \[
\#\mathrm{T}_\gamma(\kappa)=
        \left\{
        \begin{array}{l l}
 q^2-1 &\textit{if $F_{\chi_\gamma}/F$ is unramified};\\
      q^2-q &\textit{if $F_{\chi_\gamma}/F$ is ramified}.
        \end{array}
        \right.
        \]
                Combination of these two completes the proof. 
\end{proof}

\begin{remark}
\cite[Section 5.9]{Kot05} describes a formula when $\gamma$ is an elliptic regular semisimple element in $\mathrm{GL}_2(\mfo)$ so that $d=0$ and $k_2=-k_1$.
In \cite[Example 4.1]{Gor22}, Gordon explains the normalization used in Kottwitz's work:  Kottwitz uses the quotient measure $\frac{dg}{dt'}$ where $dg$ is the Haar measure on $\mathrm{GL}_2(F)$ which is the same as that defined in Section \ref{classical_measure} and where $dt'$ is the Haar measure on $\mathrm{T}_\gamma(F)$ normalized as follows:
\[
\mathrm{vol}(dt',\mathrm{T}_c)=\left\{
\begin{array}{l l}
   1  & \textit{if $F_{\chi_\gamma}/F$ is unramified}; \\
\frac{1}{2} & \textit{if $F_{\chi_\gamma}/F$ is ramified}
\end{array}\right.
~~~ \textit{ so that }  ~~~~~~   dt'=\left\{
\begin{array}{l l}
   dt  & \textit{if $F_{\chi_\gamma}/F$ is unramified}; \\
\frac{1}{2} dt & \textit{if $F_{\chi_\gamma}/F$ is ramified}.
\end{array}\right.
\]
Here $dt$ is the Haar measure on $\mathrm{T}_\gamma(F)$ which is defined in Section \ref{classical_measure}.
By accounting the difference between $dt'$ and $dt$, the special case of  Theorem \ref{result_2_dmu} when $d=0$ coincides with \cite[Section 5.9]{Kot05}.
Note that this comparison is based on  the fact that $S(\gamma)=d_{\gamma}$, where $d_{\gamma}$ is defined in  \cite[Section 5.9]{Kot05}.
\end{remark}


\subsection{Orbital integrals on the  spherical  Hecke algebra of $\mathrm{GL}_3$}\label{subsec4.2}
In this subsection, we will state our main result for $\mathrm{GL}_3$ in Theorem \ref{mainthmgl3} in terms of the geometric measure (cf. Section \ref{subsection:geometric}) and in Theorem \ref{mainthmgl3q} in terms of the quotient measure (cf. Section \ref{classical_measure}).
As in the previous subsection,  $\mathrm{ord}(c_3)$ is denoted  by $d$. 
Note that if $d>0$, then the reduction of $\chi_{\gamma}(x)$ modulo $\pi$ is $x^3$ since $\gamma$ is elliptic, equivalently $\chi_{\gamma}(x)$ is irreducible. 

\begin{proposition}\cite[Theorem 6.1]{CKL}\label{result_3}
Suppose that $d>0$.
For an elliptic regular semisimple element $\gamma\in\mathrm{GL}_3(F)\cap \mathrm{M}_{3}(\mfo)$, 
the formula for $\mathcal{SO}_{\gamma}(\mathbbm{1}_{D_{(0,k_2, k_3)}})$ with $k_2+k_3=d$ is given as follows:
\[
\mathcal{SO}_{\gamma}(\mathbbm{1}_{D_{(0,k_2, k_3)}})=\left\{
  \begin{array}{l l}
(q^{3}-1)(q^{2}-1)q^{k_{3}}\cdot \frac{(k_2+1)q-(k_2-1)}{q^{6+d}}   & \textit{if $0<k_2<\lceil\frac{d}{3}\rceil$ and $k_2< k_3$};\\
(q^{3}-1)(q^{2}-1)q^{k_{3}}\cdot \frac{(k_3-k_2+1)q-(k_3-k_2-1)}{q^{6+d}}   & \textit{if $\lfloor \frac{d}{3}\rfloor<k_2<k_3$};\\
\frac{(q^{3}-1)(q^{2}-1)}{q^{5+d}}\cdot q^{k_{2}}   & \textit{if $k_2=k_3$};\\
(q^{3}-1)(q^{2}-1)q^{\frac{2d}{3}}\cdot \frac{(\frac{d}{3}+1)q-(\frac{d}{3}-1)}{q^{6+d}} & \textit{if $(k_2, k_3)=(\frac{d}{3}, \frac{2d}{3})$},
\end{array} \right.
\]
where in the fourth case we suppose that the reduction of $\chi_\gamma(\pi^{\frac{d}{3}}x)/\pi^d$ modulo $\pi$ is irreducible over $\kappa$.
\end{proposition}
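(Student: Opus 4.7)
The plan is to combine the decomposition $\mathcal{SO}_\gamma(\mathbbm{1}_{D_{(0,k_2,k_3)}}) = c_{(0,k_2,k_3)} \cdot \mathcal{SO}_{\gamma,(0,k_2,k_3)}$ from Proposition \ref{prop:hecke_type} with a direct computation of both factors, working with a single fixed lattice $M$ of the prescribed type.

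First I would compute the combinatorial factor $c_{(0,k_2,k_3)}$ by counting $\mfo$-sublattices of $L$ with elementary divisor type $(0,k_2,k_3)$. This is a standard affine-Grassmannian count: for $0 < k_2 < k_3$ the answer is a polynomial in $q$ of degree $k_2 + k_3$, while for $k_2 = k_3$ the orbit is smaller and the polynomial has lower degree; the leading $q^{k_3}$ or $q^{k_2}$ factors visible on the right-hand side of the proposition ultimately come from these counts.

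Next I would compute $\mathcal{SO}_{\gamma,\cL(L,M)}$ by fixing a representative $M = \mfo \cdot e_1 \oplus \pi^{k_2}\mfo \cdot e_2 \oplus \pi^{k_3}\mfo \cdot e_3$ and studying $\rho_3$ restricted to the open subscheme $\cL(L,M) \subset \mathrm{Hom}_\mfo(L,M)$. The orbit $O_{\gamma, \cL(L,M)}$ is the set of $\mfo$-points of the fibre $\rho_3^{-1}(\chi_\gamma) \cap \cL(L,M)$, and $|\omega_{\chi_\gamma}^{\mathrm{ld}}|$ is the measure induced by the short exact sequence of differentials in Definition \ref{diff1}. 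Since $\gamma$ is elliptic with $d = \mathrm{ord}(c_3) > 0$, the reduction modulo $\pi$ of every element of the orbit is nilpotent and the scheme $\rho_3^{-1}(\chi_\gamma)$ is not smooth along its special fibre; it must therefore be replaced by its smoothening in the sense of \cite{CKL}. The integral then collapses to a weighted count of $\kappa$-points of the smooth model, and the four case-wise formulas of the proposition correspond to four distinct smoothening patterns, classified by the position of $(k_2, k_3)$ relative to the unique Newton slope $d/3$ of $\chi_\gamma$.

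The main obstacle is the last case $(k_2, k_3) = (d/3, 2d/3)$, in which the type of $M$ exactly matches the Newton slope of $\chi_\gamma$. Here $\chi_\gamma(\pi^{d/3} x)/\pi^d$ is a monic integral cubic with unit constant term, and without an irreducibility hypothesis on its reduction modulo $\pi$ the smoothening would require further N\'eron-style blow-ups whose number depends on the factorisation type over $\kappa$; under the stated irreducibility assumption, the rescaled order coincides with $\mfo_{F_{\chi_\gamma}}$ and the smoothening terminates in a single controlled step, producing the stated formula. As a final consistency check I would sum the four expressions over all valid $(0, k_2, k_3)$ with $k_2 + k_3 = d$ and match Proposition \ref{prop_stratification} applied to the known value of $\mathcal{SO}_\gamma(\mathbbm{1}_{\mathrm{M}_3(\mfo)})$.
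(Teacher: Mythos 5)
This statement is not proved in the paper at all: Proposition~\ref{result_3} is quoted verbatim as~\cite[Theorem~6.1]{CKL}, and the present paper merely applies it as a black box (together with Propositions~\ref{prop_red}, \ref{prop_k_ntype}, \ref{thm_for_M_3}) to derive Theorem~\ref{mainthmgl3}. So there is no ``paper's own proof'' to compare against, and you should not expect to re-derive the formula from the material contained here.

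That said, your outline is broadly consistent with what the paper reports about CKL's method: the factorization $\mathcal{SO}_\gamma(\mathbbm{1}_{D_{(0,k_2,k_3)}}) = c_{(0,k_2,k_3)}\cdot\mathcal{SO}_{\gamma,(0,k_2,k_3)}$ is exactly Proposition~\ref{prop:hecke_type}, and the paper does describe CKL's technique as ``smoothening of a certain scheme over $\mfo$.'' You also correctly single out the fourth case $(k_2,k_3)=(d/3,2d/3)$ and the role of irreducibility of $\overline{\chi_\gamma(\pi^{d/3}x)/\pi^d}$ as the genuine obstruction; indeed the entire point of Section~\ref{subsec4.2} of the present paper is to remove that hypothesis by the translation trick $\gamma\mapsto\gamma-aI_3$ of Lemma~\ref{prop_specialconst}.

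However, as a proof your proposal has serious gaps. First, every step that would actually produce the displayed formulas is asserted, not performed: you never compute $c_{(0,k_2,k_3)}$, you never exhibit the smooth model nor explain how the blow-up chain yields a contribution linear in $k_2$ or in $k_3-k_2$, and you never show why the boundary between cases sits at the Newton slope $d/3$. Saying ``the four case-wise formulas correspond to four distinct smoothening patterns'' is a restatement of the theorem, not an argument. Second, your proposed consistency check is flawed as stated: Proposition~\ref{prop_stratification} sums over \emph{all} types $(k_1,k_2,k_3)$ with $k_1\ge0$ and $\sum k_i=d$, not only over $(0,k_2,k_3)$; to use it you would first need Proposition~\ref{prop_red} to fold in the $k_1>0$ strata, and even then you obtain a single scalar identity, which cannot independently confirm four distinct formulas. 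Finally, the phrase ``$O_{\gamma,\cL(L,M)}$ is the set of $\mfo$-points of $\rho_3^{-1}(\chi_\gamma)\cap\cL(L,M)$'' needs care: $G_\gamma=\rho_3^{-1}(\chi_\gamma)$ is a scheme over $F$, not over $\mfo$, and the object one smoothens is the $\mfo$-scheme cut out inside $\cL(L,M)$ by the coefficient equations; conflating the two obscures exactly the place where the nontrivial integral structure enters.
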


\begin{proposition}\cite[Theorem 6.5]{CKL}\label{thm_for_M_3}
    For an elliptic regular semisimple element $\gamma \in\mathrm{GL}_3(F)\cap\mathrm{M}_3(\mfo)$, the orbital integral for $\gamma$ and for $\mathbbm{1}_{\mathrm{M}_3(\mfo)}$ is as follows:
    \[
    \mathcal{SO}_{\gamma}(\mathbbm{1}_{\mathrm{M}_3(\mfo)})=\left\{\begin{array}{l l}
\frac{(q+1)(q^{2}+q+1)}{q^{3}}-\frac{3(q^{2}+q+1)}{q^{d'+3}}+\frac{3}{q^{3d'+3}}&\textit{if }F_{\chi_{\gamma}}/F\textit{ is unramified so that }S(\gamma)=3d';\\
\frac{(q+1)(q^{2}+q+1)}{q^{3}}-\frac{(2q+1)(q^{2}+q+1)}{q^{d'+4}}+\frac{q^{2}+q+1}{q^{3d'+5}} & \textit{if }F_{\chi_{\gamma}}/F\textit{ is ramified and }S(\gamma)=3d';\\
\frac{(q+1)(q^{2}+q+1)}{q^{3}}-\frac{(q+2)(q^{2}+q+1)}{q^{d'+4}}+\frac{q^{2}+q+1}{q^{3d'+6}} & \textit{if }F_{\chi_{\gamma}}/F\textit{ is ramified and }S(\gamma)=3d'+1.\\
\end{array}\right.
    \]
\end{proposition}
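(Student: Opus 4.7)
The plan is to compute $\mathcal{SO}_\gamma(\mathbbm{1}_{\mathrm{M}_3(\mfo)})$ by stratifying according to Proposition \ref{prop_stratification}, reducing each stratum to $k_1=0$ via Proposition \ref{prop_red}, and evaluating each reduced term via Propositions \ref{prop_k_ntype} and \ref{result_3}. The subtlety is that the fourth case of Proposition \ref{result_3} requires an irreducibility hypothesis that may fail for $\gamma$ itself. To circumvent this, I would first apply Lemma \ref{constantlemma} to replace $\gamma$ by $\beta:=\gamma-a\cdot I_3$, where $a\in\mfo$ is chosen as in Lemma \ref{prop_specialconst}(1). This leaves $\mathcal{SO}_\gamma(\mathbbm{1}_{\mathrm{M}_3(\mfo)})$ unchanged but guarantees that in the unramified case $3\mid d_a$ and the reduction of $\chi_{\beta}(\pi^{d_a/3}x)/\pi^{d_a}$ modulo $\pi$ is irreducible over $\kappa$, while in the ramified case $3\nmid d_a$, so the problematic case never arises.

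A key observation is that at a stratum $(k_1,k_2,k_3)$ with $e:=d_a-3k_1$ divisible by $3$ and $(k_2-k_1,k_3-k_1)=(e/3,2e/3)$, the hypothesis needed for Proposition \ref{result_3} is irreducibility of the reduction of $\chi_{\beta^{(k_1)}}(\pi^{e/3}x)/\pi^{e}$ modulo $\pi$; a direct computation using $\chi_{\beta^{(k_1)}}(x)=\chi_\beta(\pi^{k_1}x)/\pi^{3k_1}$ and $k_1+e/3=d_a/3$ shows
\[
\chi_{\beta^{(k_1)}}(\pi^{e/3}x)/\pi^{e}\;=\;\chi_{\beta}(\pi^{d_a/3}x)/\pi^{d_a},
\]
so a single choice of $a$ secures the hypothesis uniformly across all strata with $3\mid e$.

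Combining Propositions \ref{prop_stratification} and \ref{prop_red} then gives
\[
\mathcal{SO}_{\beta}(\mathbbm{1}_{\mathrm{M}_3(\mfo)})=\sum_{k_1=0}^{\lfloor d_a/3\rfloor} q^{-3k_1}\sum_{\substack{k_2'+k_3'=d_a-3k_1\\0\le k_2'\le k_3'}}\mathcal{SO}_{\beta^{(k_1)}}(\mathbbm{1}_{D_{(0,k_2',k_3')}}).
\]
Each inner summand is evaluated by Proposition \ref{prop_k_ntype} when $k_2'=0<k_3'$ and by the appropriate case of Proposition \ref{result_3} when $k_2'>0$. The only stratum not covered by either is $(k_2',k_3')=(0,0)$, which occurs precisely when $k_1=d_a/3$ and $3\mid d_a$; for this $\beta^{(d_a/3)}$ has unit determinant and (by the choice of $a$) irreducible characteristic polynomial modulo $\pi$, and a direct computation handles this base term. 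Finally, Lemma \ref{prop_specialconst}(2) translates the resulting formula in $d_a$ into the three cases in $S(\gamma)$ stated in the conclusion.

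The hard part will be the bookkeeping of the nested sum: one must verify that the various contributions --- from the four cases of Proposition \ref{result_3}, from Proposition \ref{prop_k_ntype}, from the boundary term, and across all $k_1$ --- combine via geometric-series telescoping into the compact closed forms stated. The three distinct final formulas emerge from the case analysis on whether $d_a\equiv 0,1,2\pmod 3$, which matches the trichotomy in the conclusion (unramified; ramified with $S(\gamma)=3d'$; ramified with $S(\gamma)=3d'+1$).
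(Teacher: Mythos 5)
This statement is cited in the paper (as \cite[Theorem~6.5]{CKL}), and the paper gives no proof of its own, so there is nothing internal to compare your argument against. What follows is an assessment of your proposal on its own terms.

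Your overall plan is the natural one given the available tools, and the key technical observation is correct and genuinely useful: replacing $\gamma$ by $\beta=\gamma-a\cdot I_3$ once and for all secures the irreducibility hypothesis in the fourth case of Proposition~\ref{result_3} at \emph{every} stratum $(k_1,\tfrac{e}{3},\tfrac{2e}{3})$ simultaneously, because of the identity $\chi_{\beta^{(k_1)}}(\pi^{e/3}x)/\pi^{e}=\chi_{\beta}(\pi^{d_a/3}x)/\pi^{d_a}$ (which indeed follows from $\chi_{\beta^{(k_1)}}(x)=\chi_\beta(\pi^{k_1}x)/\pi^{3k_1}$ and $k_1+e/3=d_a/3$). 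Your case analysis also correctly identifies that the ramified case has $d_a\not\equiv 0\pmod 3$, so the fourth case of Proposition~\ref{result_3} and the $(0,0,0)$ boundary term never arise there, while the unramified case has $d_a=3d'$.

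There is, however, one genuine gap: the boundary stratum $(k_1,k_2,k_3)=(d_a/3,d_a/3,d_a/3)$ in the unramified case, which reduces to $\mathcal{SO}_{\beta^{(d_a/3)}}(\mathbbm{1}_{D_{(0,0,0)}})=\mathcal{SO}_{\beta^{(d_a/3)}}(\mathbbm{1}_{\mathrm{M}_3(\mathfrak{o})})$, the very quantity the proposition computes. You dismiss this with ``a direct computation handles this base term,'' but this is the entire anchor of the induction-by-$d_a$ and must be argued. What saves you is the choice of $a$: the reduction $\bar{\chi}_{\beta^{(d_a/3)}}$ is irreducible over $\kappa$, hence separable, so $G_{\beta^{(d_a/3)}}$ is smooth over $\mathfrak{o}$ and its $\mathfrak{o}$-points form a single $\mathrm{GL}_3(\mathfrak{o})$-conjugacy class; the volume is then $\#G_{\beta^{(d_a/3)}}(\kappa)/q^{\dim G_{\beta^{(d_a/3)}}}=\#\mathrm{GL}_3(\kappa)/\bigl((q^3-1)q^{6}\bigr)=\tfrac{(q^2-1)(q-1)}{q^3}$, which is exactly the value the stated formula gives at $d'=0$. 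You should spell out this smoothness and point-count argument rather than leave it implicit; without it the proof does not close. Beyond that, the ``bookkeeping'' you flag is real work (the nested sum over $k_1$ and the four sub-cases of Proposition~\ref{result_3} must telescope), but given that your ingredients evaluate every term, this is a finite verification rather than a missing idea.
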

\begin{theorem}\label{mainthmgl3}
    For an elliptic regular semisimple element $\gamma \in \mathrm{GL}_3(F)\cap \mathrm{M}_{3}(\mfo)$, the orbital integral for $\gamma$ and for $\mathbbm{1}_{D_{(k_1,k_2,k_3)}}$ is as follows:
\begin{enumerate}
    \item In the case that $k_1+k_2+k_3\neq d$, 
$    \mathcal{SO}_{\gamma}(\mathbbm{1}_{D_{(k_1,k_2,k_3)}})=0$.
    
\item In the case that $k_1+k_2+k_3=d$ and $k_1<k_2<k_3$ with $k_2\neq \frac{d}{3}$, 
we have
    \[
    \mathcal{SO}_{\gamma}(\mathbbm{1}_{D_{(k_1,k_2,k_3)}})=q^{k_3-k_1-d-6}(q^3-1)(q^2-1)\cdot\left\{\begin{array}{l l}
((k_2-k_1+1)q-(k_2-k_1-1))&\textit{if }k_1< k_2<\frac{d}{3}<k_3;\\
((k_3-k_2+1)q-(k_3-k_2-1)) & \textit{if }k_1<\frac{d}{3}<k_2<k_3,
\end{array}\right.
    \]

\item In the case that $k_1+k_2+k_3=d$ and $k_1<\frac{d}{3}=k_2<k_3$, we have 
\[
    \mathcal{SO}_{\gamma}(\mathbbm{1}_{D_{(k_1,k_2,k_3)}})=
    \frac{(q^3-1)(q^2-1)}{q^{2k_1+d'+6}}\left(\left(\frac{d}{3}-k_1+1\right)q^{d'-\frac{d}{3}+1}-\left(\frac{d}{3}-k_1-1\right)q^{d'-\frac{d}{3}}+3q\frac{q^{d'-\frac{d}{3}}-1}{q-1}+\varepsilon(\gamma)\right),
    \]
    where $d'=\lfloor\frac{S(\gamma)}{3} \rfloor$ and
    $
    \varepsilon(\gamma)=\left\{\begin{array}{l l}
    0&\textit{if $F_{\chi_\gamma}/F$ is unramified and $S(\gamma)=3d'$};\\
    1& \textit{if $F_{\chi_\gamma}/F$ is ramified and $S(\gamma)=3d'$};\\
    2& \textit{if $F_{\chi_\gamma}/F$ is ramified and $S(\gamma)=3d'+1$}.
    \end{array}\right.
    $
\textit{ }

\item In the case that $k_1+k_2+k_3=d$ and exactly two of $k_i$'s are equal, 
we have 
   \[
    \mathcal{SO}_{\gamma}(\mathbbm{1}_{D_{(k_1,k_2,k_3)}})=\left\{\begin{array}{l l}
q^{k_2-k_1-d-5}(q^3-1)(q^2-1) & \textit{if }k_1< k_2=k_3;\\
q^{-3k_1-5}(q^3-1)(q^2-1) & \textit{if }k_1=k_2<k_3,
\end{array}\right.
    \]

    \item 
    In the case that $k_1=k_2=k_3=d/3$, 
 we have
    \[
    \mathcal{SO}_{\gamma}(\mathbbm{1}_{D_{(k_1,k_2,k_3)}})=
    \left\{\begin{array}{l l}
    \frac{(q+1)(q^{2}+q+1)}{q^{3+d}}-\frac{3(q^{2}+q+1)}{q^{d'+\frac{2d}{3}+3}}+\frac{3}{q^{3d'+3}} &\textit{if $F_{\chi_\gamma}/F$ is unramified and $S(\gamma)=3d'$};\\
    \frac{(q+1)(q^{2}+q+1)}{q^{3+d}}-\frac{(2q+1)(q^{2}+q+1)}{q^{d'+\frac{2d}{3}+4}}+\frac{q^{2}+q+1}{q^{3d'+5}} & \textit{if $F_{\chi_\gamma}/F$ is ramified and $S(\gamma)=3d'$};\\
    \frac{(q+1)(q^{2}+q+1)}{q^{3+d}}-\frac{(q+2)(q^{2}+q+1)}{q^{d'+\frac{2d}{3}+4}}+\frac{q^{2}+q+1}{q^{3d'+6}} & \textit{if $F_{\chi_\gamma}/F$ is ramified and $S(\gamma)=3d'+1$}.
    \end{array}\right.
    \]
\end{enumerate}
\end{theorem}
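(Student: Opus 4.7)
The plan is to treat the five cases of the theorem separately, using Proposition~\ref{prop_red} to reduce each to a $k_1 = 0$ computation for the rescaled element $\gamma^{(k_1)}$, and then invoking Proposition~\ref{result_3}, Proposition~\ref{prop_k_ntype}, or Proposition~\ref{thm_for_M_3}. Case~(1) is immediate, since every element of $O_\gamma$ has determinant of order $d$, so $O_\gamma \cap D_{(k_1,k_2,k_3)} = \emptyset$ when $\sum k_i \neq d$. For case~(2), after reducing to $(0, k_2 - k_1, k_3 - k_1)$ for $\gamma^{(k_1)}$, the middle entry avoids $d^{(k_1)}/3$ with $d^{(k_1)} = d - 3k_1$, so one of the first two sub-cases of Proposition~\ref{result_3} applies. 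Case~(4) splits into $k_1 < k_2 = k_3$ (reducing to $(0, m, m)$ with $m = k_2 - k_1$, handled by the $k_2 = k_3$ sub-case of Proposition~\ref{result_3}) and $k_1 = k_2 < k_3$ (reducing to $(0, 0, k_3 - k_1)$, handled by Proposition~\ref{prop_k_ntype} combined with $\#\mathrm{GL}_3(\kappa) = q^3(q-1)(q^2-1)(q^3-1)$). For case~(5), $\gamma^{(d/3)} \in \mathrm{GL}_3(\mfo)$ has unit determinant, so its orbit in $M_3(\mfo)$ lies in $D_{(0,0,0)} = \mathrm{GL}_3(\mfo)$; thus $\mathcal{SO}_{\gamma^{(d/3)}}(\mathbbm{1}_{D_{(0,0,0)}}) = \mathcal{SO}_{\gamma^{(d/3)}}(\mathbbm{1}_{M_3(\mfo)})$, and Proposition~\ref{thm_for_M_3} with the substitution $S(\gamma^{(d/3)}) = S(\gamma) - d$ produces the three stated formulas.

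Case~(3) is the main work. After reducing to $k_1 = 0$, the task is to compute $\mathcal{SO}_{\gamma^{(k_1)}}(\mathbbm{1}_{D_{(0, d^{(k_1)}/3, 2 d^{(k_1)}/3)}})$. Since $\chi_{\gamma^{(k_1)}}(\pi^{d^{(k_1)}/3} x)/\pi^{d^{(k_1)}} = \chi_\gamma(\pi^{d/3} x)/\pi^d$, the irreducibility hypothesis of Proposition~\ref{result_3}.(4) is independent of $k_1$; when it holds, that sub-case finishes the proof. Otherwise I invoke Lemma~\ref{prop_specialconst}.(1) to choose $a \in \mfo$ so that either $3 \nmid d_a$ (whence the type $(0, d_a/3, 2 d_a/3)$ never arises for $\gamma - a I_3$) or $3 \mid d_a$ and the irreducibility condition is satisfied for $\gamma - a I_3$; in either alternative every term $\mathcal{SO}_{\gamma - a I_3}(\mathbbm{1}_{D_{(0, i, d_a - i)}})$ is computable by Proposition~\ref{result_3}.

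I then establish the identity
\[
\sum_{j=1}^{\lfloor d/2 \rfloor} \mathcal{SO}_\gamma(\mathbbm{1}_{D_{(0, j, d - j)}}) \;=\; \sum_{i=1}^{\lfloor d_a/2 \rfloor} \mathcal{SO}_{\gamma - a I_3}(\mathbbm{1}_{D_{(0, i, d_a - i)}})
\]
and solve for the unique unknown $\mathcal{SO}_\gamma(\mathbbm{1}_{D_{(0, d/3, 2d/3)}})$ on the left. The identity follows from decomposing both sides of $\mathcal{SO}_\gamma(\mathbbm{1}_{M_3(\mfo)}) = \mathcal{SO}_{\gamma - a I_3}(\mathbbm{1}_{M_3(\mfo)})$ (Lemma~\ref{constantlemma}) via the stratification of Proposition~\ref{prop_stratification}: the $k_1 = 0$ contribution on each side equals $\sum_{j \geq 0} \mathcal{SO}_\gamma(\mathbbm{1}_{D_{(0, j, d - j)}})$ (respectively with $d_a$), while the $k_1 \geq 1$ tail is identified with $q^{-3} \mathcal{SO}_{\gamma^{(1)}}(\mathbbm{1}_{M_3(\mfo)})$ via the bijection $X = \pi Y$ combined with Proposition~\ref{prop_red}. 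These tails for $\gamma$ and $\gamma - a I_3$ coincide because $F_{\chi_{\gamma^{(1)}}} = F_{\chi_{(\gamma - a I_3)^{(1)}}} = F_{\chi_\gamma}$ share the same ramification type, and $S(\gamma^{(1)}) = S((\gamma - a I_3)^{(1)}) = S(\gamma) - 3$ (because the $\mfo$-algebras $\mfo[x]/(\chi_{\gamma - a I_3})$ and $\mfo[x]/(\chi_\gamma)$ are isomorphic via $x \mapsto x + a$), so Proposition~\ref{thm_for_M_3} produces equal values; canceling the common $j = i = 0$ contribution $(q^3 - 1)(q^2 - 1)/q^5$ from Proposition~\ref{prop_k_ntype} yields the identity.

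The main obstacle is the bookkeeping in the solve-for-unknown step: one must sum the three explicit sub-formulas of Proposition~\ref{result_3} over $j \in \{1, \ldots, \lfloor d/2 \rfloor\} \setminus \{d/3\}$ on the left and over $i \in \{1, \ldots, \lfloor d_a/2 \rfloor\}$ on the right, use Lemma~\ref{prop_specialconst}.(2) together with Remark~\ref{remark:comp} (which guarantees $d_a \geq d$) to relate $d_a$ to $S(\gamma)$ in the three sub-cases distinguished by ramification of $F_{\chi_\gamma}/F$ and $S(\gamma) \bmod 3$, and then isolate the unknown in closed form. These three sub-cases naturally produce the three values of $\varepsilon(\gamma) \in \{0, 1, 2\}$ appearing in the statement. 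Finally, multiplying by $q^{-3 k_1}$ from Proposition~\ref{prop_red} recovers the general-$k_1$ formula of case~(3).
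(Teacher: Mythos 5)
Your proposal follows the same overall plan as the paper for all five cases: cases (1), (2), (4), (5) are handled identically (reduce to $k_1=0$ via Proposition~\ref{prop_red} and apply Propositions~\ref{result_3}, \ref{prop_k_ntype}, \ref{thm_for_M_3}), and case (3) is correctly recognized as the crux, to be attacked via Lemma~\ref{prop_specialconst} and the shift $\gamma\mapsto\gamma-aI_3$. Where you depart from the paper is in the derivation of the key identity
\[
\sum_{j=1}^{\lfloor d^{\dag}/2 \rfloor}\mathcal{SO}_{\gamma^{(k_1)}}(\mathbbm{1}_{D_{(0,j,d^{\dag}-j)}})
=\sum_{i=1}^{\lfloor d^{\dag}_{a}/2 \rfloor}\mathcal{SO}_{\gamma^{(k_1)}-a I_3}(\mathbbm{1}_{D_{(0,i,d^{\dag}_{a}-i)}}).
\]
The paper restricts the bijection $\iota_a\colon X\mapsto X-aI_3$ directly to the locus of elements whose reduction mod $\pi$ has rank exactly $1$, using the single observation that $a\in(\pi)$ (so $\iota_a$ preserves the reduction mod $\pi$ and hence the rank stratification). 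You instead decompose $\mathcal{SO}_\cdot(\mathbbm{1}_{\mathrm{M}_3(\mfo)})$ via Proposition~\ref{prop_stratification} into the $k_1=0$ part and a $k_1\ge1$ tail, identify the tail with $q^{-3}\mathcal{SO}_{\gamma^{(1)}}(\mathbbm{1}_{\mathrm{M}_3(\mfo)})$, argue tail equality by comparing Serre invariants and ramification types through Proposition~\ref{thm_for_M_3}, and then cancel the $(0,0,d^{\dag})$ term via Proposition~\ref{prop_k_ntype}. This is a valid alternative: it trades the paper's one-line rank-preservation argument for a more computational cancellation, at the cost of invoking two auxiliary closed-form results ($\mathbbm{1}_{\mathrm{M}_3(\mfo)}$ and $\mathbbm{1}_{D_{(0,0,k)}}$) instead of one geometric fact.

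There is, however, a gap you should close. You never verify that $a\in(\pi)$, and your argument quietly depends on it: for the tail identification to yield a meaningful common value you need $(\gamma^{(k_1)}-aI_3)^{(1)}$ to be well-defined (i.e.\ $\chi_{\gamma^{(k_1)}-aI_3}(\pi x)/\pi^3\in\mfo[x]$). If $a\notin(\pi)$ then $\bar{\chi}_{\gamma^{(k_1)}-aI_3}=(x+\bar a)^3$ has a nonzero constant term, forcing $d_a^{\dag}=0$, which contradicts both alternatives of Lemma~\ref{prop_specialconst}.(1) (in the unramified alternative the claimed irreducibility of $(x+\bar a)^3$ fails; in the ramified alternative $0\equiv 0\bmod 3$). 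So $a\in(\pi)$ does hold, but it is a necessary step of the proof, not a free observation — the paper isolates it as step (a). Relatedly, you should note that $\gamma^{(k_1)}$ having $\gamma^{(k_1),(1)}$ well-defined is not automatic from $d^{\dag}>0$; it follows because $\chi_{\gamma^{(k_1)}}$ is irreducible with $d^{\dag}\ge 3$, so its Newton polygon is a single segment of slope $-d^{\dag}/3$, giving $\mathrm{ord}(c_1^{(k_1)})\ge 1$, $\mathrm{ord}(c_2^{(k_1)})\ge 2$. With these two points made explicit, your proof of case (3) is complete and correct.
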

\begin{proof}
Before proving our formula, we remark about the structure of the proof. 
(1) is obvious and (2), (4), (5) are simple applications of Propositions \ref{prop_k_ntype}, \ref{result_3}, and \ref{thm_for_M_3}, combined with Propositions \ref{prop:hecke_type}- \ref{prop_red}.
(3) requires an extra idea.
In the following, we will prove our formulas case by case with details.    
    
    \begin{enumerate}
  \item  Since $\mathrm{ord}(\det \gamma)=d$, $\mathcal{SO}_{\gamma}(\mathbbm{1}_{D_{(k_1,k_2,k_3)}})=0$ unless $k_1+k_2+k_3=d$.

From now on in the proof, we suppose that $k_1+k_2+k_3=d$.
 By Propositions \ref{prop:hecke_type}- \ref{prop_red}, we have
\begin{equation}\label{equation:gl3}
        \mathcal{SO}_{\gamma}(\mathbbm{1}_{D_{(k_1,k_2,k_3)}})=q^{-3k_1}\cdot \mathcal{SO}_{\gamma^{(k_1)}}(\mathbbm{1}_{D_{(0,k_2-k_1,k_3-k_1)}}).
    \end{equation}


\item Suppose that  that $k_1+k_2+k_3=d$ and that $k_1<k_2<k_3$ with $k_2\neq \frac{d}{3}$. 
By Equation (\ref{equation:gl3}), we apply the first of Proposition \ref{result_3} to $\mathcal{SO}_{\gamma^{(k_1)}}(\mathbbm{1}_{D_{(0,k_2-k_1,k_3-k_1)}})$ when $k_2-k_1<\frac{d}{3}-k_1<k_3-k_1$ and the second of Proposition \ref{result_3} when $\frac{d}{3}-k_1<k_2-k_1<k_3-k_1$. 
Then we obtain the desired result.
\\

\item Suppose that $k_1+k_2+k_3=d$ and that $k_1<k_2=\frac{d}{3}<k_3$. 
By Equation (\ref{equation:gl3}), we will work with 
$\mathcal{SO}_{\gamma^{(k_1)}}(\mathbbm{1}_{D_{(0,\frac{d}{3}-k_1,k_3-k_1)}})$.
Note that  $\mathrm{ord}(\det \gamma^{(k_1)})=d-3k_1$ and thus  $k_3-k_1=2(\frac{d}{3}-k_1)$.

To ease the notation let  $d^{\dag}:=d-3k_1>0$. We then have $\frac{d}{3}-k_1=\frac{d^{\dag}}{3}$ and $k_3-k_1=\frac{2d^{\dag}}{3}$.
Then we need to analyze $\mathcal{SO}_{\gamma^{(k_1)}}(\mathbbm{1}_{D_{(0,\frac{d^{\dag}}{3}, \frac{2d^{\dag}}{3})}})$.
We emphasize that it is not possible to use the fourth equation in Proposition \ref{result_3} due to the absence of  an extra condition required. 

We prove our formula step by step in the following.

\begin{enumerate}
    \item 
Choose $a\in \mfo$ and the integer $d^\dag_{a}$ associated to $\gamma^{(k_1)}$ in Lemma  \ref{prop_specialconst}.(1) so that 
 the reduction of $\chi_{\gamma^{(k_1)},a}(x):=\chi_{\gamma^{(k_1)}}(x+a)$ modulo $\pi$ is either irreducible or $x^3$, and so that $d^\dag_{a}=\mathrm{ord}\left(\chi_{\gamma^{(k_1)},a}(0)\right)$. Since the reduction of $\chi_{\gamma^{(k_1)}}(x)$ modulo $\pi$ is $x^3$ (because $d^{\dag}>0$), the reduction of $a$ modulo $\pi$ is $0$, equivalently $a\in \left(\pi\right)$.
Recall that $\chi_{\gamma^{(k_1)},a}(x)$ is the characteristic polynomial of $\gamma^{(k_1)}-a\cdot I_3$ in Definition \ref{def:chigamma}.
\item We claim that
    \begin{equation}\label{eq:special_type1}
    \sum_{j=1}^{\lfloor d^{\dag}/2 \rfloor}\mathcal{SO}_{\gamma^{(k_1)}}(\mathbbm{1}_{D_{(0,j,d^{\dag}-j)}})=
    \sum_{i=1}^{\lfloor d^\dag_{a}/2 \rfloor}\mathcal{SO}_{\gamma^{(k_1)}-a\cdot I_3}(\mathbbm{1}_{D_{(0,i,d^\dag_{a} -i)}}).
    \end{equation}

    Recall from  Lemma \ref{constantlemma} that there exists a bijection 
    $$\iota_a:
    O_{\gamma^{(k_1)},\mathrm{M}_3(\mfo)}
    \longrightarrow O_{\gamma^{(k_1)}-a\cdot I_3,\mathrm{M}_3(\mfo)}, ~~~~ \ X\mapsto X-a\cdot I_3$$ which yields the identity  
     $\mathcal{SO}_{\gamma^{(k_1)}}(\mathbbm{1}_{\mathrm{M}_3(\mfo)})=\mathcal{SO}_{\gamma^{(k_1)}-a\cdot I_3}(\mathbbm{1}_{\mathrm{M}_3(\mfo)})$. 
Then by Proposition \ref{prop:hecke_type}, it suffices to prove that the map $\iota_a$ yields a bijection: $$\iota_a: \bigsqcup\limits_{j=1}^{\lfloor d^{\dag}/2 \rfloor}O_{\gamma^{(k_1)}, D_{(0,j,d^\dag-j)}} \longrightarrow \bigsqcup\limits_{i=1}^{\lfloor d^\dag_{a}/2 \rfloor}\mathcal{O}_{\gamma^{(k_1)}-a\cdot I_3,  D_{(0,i,d^\dag_{a} -i)}},$$  
where $O_{\gamma^{(k_1)}, D_{(0,j,d^\dag-j)}}=G_{\gamma^{(k_1)}}(F)\cap D_{(0,j,d^\dag-j)}$ and 
the same principle for $O_{\gamma^{(k_1)}-a\cdot I_3, D_{(0,i,d^\dag_a-i)}}$.
Choose $X\in \bigsqcup\limits_{j=1}^{\lfloor d^{\dag}/2 \rfloor}O_{\gamma^{(k_1)}, D_{(0,j,d^\dag-j)}}$. To show that $\iota_a(X)$ is contained in $\bigsqcup\limits_{i=1}^{\lfloor d^\dag_{a}/2 \rfloor}\mathcal{O}_{\gamma^{(k_1)}-a\cdot I_3,  D_{(0,i,d^\dag_{a} -i)}}$, it suffices to show that the rank of the reduction of  $X-a\cdot I_3$ modulo $\pi$ is $1$ since the complement of this set inside $O_{\gamma,\mathrm{M}_3(\mfo)}$ is characterized by the condition that the rank of the reduction modulo $\pi$ is either $0$ or $2$.
This follows from the above step (a), that is,  $a\in (\pi)$.
Therefore $\iota_a$ is injective.
The same principle holds for the inverse of $\iota_a$, which sends $Y$ to $Y+a\cdot I_n$. 
Thus $\iota_a$ is bijective.

\item We compute the right hand side of  Equation (\ref{eq:special_type1}) using Proposition \ref{result_3}. More precisely, we use
\[
\left\{\begin{array}{l l}
\textit{the first of Proposition \ref{result_3}} &\textit{if $1\leq i<\frac{d^\dag_a}{3}$};\\
\textit{the second of Proposition \ref{result_3}} &\textit{if $\frac{d^\dag_a}{3}<i\leq\lfloor \frac{d^\dag_a}{2} \rfloor$ and $i\neq \frac{d^\dag_a}{2}$};\\
\textit{the third of Proposition \ref{result_3}} &\textit{if $i=\frac{d^\dag_a}{2}$};\\
\textit{the fourth of Proposition \ref{result_3}} &\textit{if $i=\frac{d^\dag_a}{3}$}.
    \end{array}\right.
\]
Here when $i=\frac{d^\dag_a}{3}$ (which occurs exactly when $F_{\chi_{\gamma^{(k_1)}}}/F$ is unramified), the fourth of Proposition \ref{result_3} is applicable
since the characteristic polynomial of $\gamma^{(k_1)}-a\cdot I_3$, which is $\chi_{\gamma^{(k_1)},a}(x)$, satisfies the extra condition of loc. cit. by Lemma \ref{prop_specialconst}.(1). To sum up, we have the following:
    \begin{equation}\label{eq:specialtype_12}
    \sum_{i=1}^{\lfloor d^\dag_a/2 \rfloor}\mathcal{SO}_{\gamma^{(k_1)}-a\cdot I_3}(\mathbbm{1}_{D_{(0,i,d^\dag_a -i)}})=
    \frac{(q^3-1)(q^2-1)}{q^{6+d^\dag_a}}\left(2q^{d^\dag_a}+3q^{d^\dag_a-1}+\cdots+3q^{d^\dag_a-\lfloor \frac{d^\dag_a}{3} \rfloor+1}+\left(d^\dag_a -3\lfloor \frac{d^\dag_a}{3} \rfloor\right)q^{d^\dag_a-\lfloor \frac{d^\dag_a}{3} \rfloor}\right).
    \end{equation}

To simplify the above more, we let $\varepsilon(\gamma):=d^\dag_a-3\lfloor \frac{d^\dag_a}{3}\rfloor$.
Note that     Lemma \ref{prop_specialconst}.\textit{(2)} implies that  
    $S(\gamma^{(k_1)})=\left\{\begin{array}{l l}
   d^\dag_a &\textit{if $F_{\chi_{\gamma^{(k_1)}}}/F$ is unramified};\\
        d^\dag_a-1&\textit{if $F_{\chi_{\gamma^{(k_1)}}}/F$ is ramified} 
    \end{array}\right.$
    and that Proposition \ref{propserre} implies that $S(\gamma^{(k_1)})=S(\gamma)-3k_1$ since  $F_{\chi_{\gamma^{(k_1)}}}= F_{\chi_\gamma}$.
    Therefore, we have
    \[
    \varepsilon(\gamma)=\left\{\begin{array}{l l}
    0&\textit{if $F_{\chi_\gamma}/F$ is unramified and $S(\gamma)=3d'$};\\
    1& \textit{if $F_{\chi_\gamma}/F$ is ramified and $S(\gamma)=3d'$};\\
    2& \textit{if $F_{\chi_\gamma}/F$ is ramified and $S(\gamma)=3d'+1$}.
    \end{array}\right.
    \]
    
\item     We compute the left side of Equation (\ref{eq:special_type1}) except for $\mathcal{SO}_{\gamma^{(k_1)}}(\mathbbm{1}_{D_{(0,\frac{d^\dag}{3},\frac{2d^\dag}{3})}})$ 
 using Proposition \ref{result_3}  as follows:
    \begin{equation}\label{specialtype_23}
    \sum_{\substack{1\leq j\leq \lfloor d^{\dag}/2 \rfloor;\\ j\neq d^{\dag}/3}}\mathcal{SO}_{\gamma^{(k_1)}}(\mathbbm{1}_{D_{(0,j,d^{\dag}-j)}})
    =\frac{(q^3-1)(q^2-1)}{q^{6+d^{\dag}}}\left(2q^{d^{\dag}}+3q^{d^{\dag}-1}+\cdots+3q^{\frac{2d^{\dag}}{3}+1}-q^{\frac{2d^{\dag}}{3}}((\frac{d^{\dag}}{3}+1)q-(\frac{d^{\dag}}{3}-1))\right).
    \end{equation}

\item Finally, we obtain the formula for $\mathcal{SO}_{\gamma^{(k_1)}}(\mathbbm{1}_{D_{(0,\frac{d^\dag}{3},\frac{2d^\dag}{3})}})$ by subtracting Equation (\ref{specialtype_23}) from Equation (\ref{eq:specialtype_12}).
Here    the factor $3q\frac{q^{d'-\frac{d}{3}}-1}{q-1}$ comes from the difference between two geometric series appearing in Equations (\ref{eq:specialtype_12}) and (\ref{specialtype_23}).
We note that $d'\left(=\lfloor\frac{S(\gamma)}{3} \rfloor\right)\geq \frac{d}{3}$ since $d^\dag_a\geq d^\dag$ by Remark \ref{remark:comp}, where $d^\dag=d-3k_1$ and $d^\dag_a=3d'-3k_1+\varepsilon(\gamma)$ (cf. the paragraph following Equation (\ref{eq:specialtype_12})).
\\
\end{enumerate}

\item Suppose that $k_1+k_2+k_3=d$ and that exactly two of $k_i$'s are equal.
For the first case when $k_1< k_2=k_3$, 
by Equation (\ref{equation:gl3}), we apply the third of  Proposition \ref{result_3} to $\mathcal{SO}_{\gamma^{(k_1)}}(\mathbbm{1}_{D_{(0,k_2-k_1,k_3-k_1)}})$ when $0<k_2-k_1=k_3-k_1$.
Then we obtain the desired result.

For the second case when $k_1=k_2<k_3$, we use $\mathcal{SO}_{\gamma^{(k_1)}}(\mathbbm{1}_{D_{(0,0,k_3-k_1)}})=\frac{(q^3-1)(q^2-1)}{q^5}$ by Proposition \ref{prop_k_ntype}.
Then Equation (\ref{equation:gl3}) yields the desired formula.
\\

    \item Suppose that $k_1+k_2+k_3=d$ and that  $k_1=k_2=k_3=\frac{d}{3}$.  
     Equation (\ref{equation:gl3}) is written as $        \mathcal{SO}_{\gamma}(\mathbbm{1}_{D_{(\frac{d}{3},\frac{d}{3},\frac{d}{3})}})=q^{-d}\cdot \mathcal{SO}_{\gamma^{(\frac{d}{3})}}(\mathbbm{1}_{D_{(0,0,0)}})$.
     Since $\mathcal{SO}_{\gamma^{(\frac{d}{3})}}(\mathbbm{1}_{D_{(0,0,0)}})=\mathcal{SO}_{\gamma^{(\frac{d}{3})}}(\mathbbm{1}_{\mathrm{M}_3(\mfo)})$, 
Proposition \ref{thm_for_M_3} yields the desired result. 
Here we use   that   $S(\gamma^{(\frac{d}{3})})=S(\gamma)-d$ by Proposition \ref{propserre} since  $F_{\chi_{\gamma}}=F_{\chi_{\gamma^{(\frac{d}{3})}}}$.
    \end{enumerate}
\end{proof}

By the comparison between the geometric measure and the quotient measure in  Proposition \ref{proptrans}, we can rewrite the above formula in terms of the quotient measure.

\begin{theorem}\label{mainthmgl3q}
Suppose that $char(F) = 0$ or $char(F)>3$.
    For an elliptic regular semisimple element $\gamma \in \mathrm{GL}_3(\mfo)$, the orbital integral for $\gamma$ and for $\mathbbm{1}_{D_{(k_1,k_2,k_3)}}$ with respect to the measure $d\mu$, defined in Section \ref{classical_measure}, is as follows:
    \begin{enumerate}
        \item In the case that $k_1+k_2+k_3\neq d$, we have
    \[
    \mathcal{SO}_{\gamma,d\mu}(\mathbbm{1}_{D_{(k_1,k_2,k_3)}})=0.
    \]
    \item In the case that $k_1+k_2+k_3=d$
    and $k_1<k_2<k_3$ with $k_2\neq \frac{d}{3}$, we have the following formulas:
    \begin{itemize}
        \item If  $k_1< k_2<\frac{d}{3}<k_3$, then
       \[
    \mathcal{SO}_{\gamma,d\mu}(\mathbbm{1}_{D_{(k_1,k_2,k_3)}})=
    \left\{\begin{array}{l l}
     q^{S(\gamma)+k_3-k_1-d-3}(q^2+q+1)((k_2-k_1+1)q-(k_2-k_1-1))&\textit{if $F_{\chi_\gamma}/F$ is unramified};\\
     q^{S(\gamma)+k_3-k_1-d-1}((k_2-k_1+1)q-(k_2-k_1-1))& \textit{if $F_{\chi_\gamma}/F$ is ramified}.
    \end{array}\right.
    \] 
    \item
    If $k_1<\frac{d}{3}<k_2<k_3$, then
    \[
    \mathcal{SO}_{\gamma,d\mu}(\mathbbm{1}_{D_{(k_1,k_2,k_3)}})=
    \left\{\begin{array}{l l}
     q^{S(\gamma)+k_3-k_1-d-3}(q^2+q+1)((k_3-k_2+1)q-(k_3-k_2-1))&\textit{if $F_{\chi_\gamma}/F$ is unramified};\\
     q^{S(\gamma)+k_3-k_1-d-1}((k_3-k_2+1)q-(k_3-k_2-1))& \textit{if $F_{\chi_\gamma}/F$ is ramified}.
    \end{array}\right.
    \]
    \end{itemize}
    \item In the case that $k_1+k_2+k_3=d$ and $k_1<\frac{d}{3}=k_2<k_3$,
we have the following formulas:
\[
    \mathcal{SO}_{\gamma,d\mu}(\mathbbm{1}_{D_{(k_1,k_2,k_3)}})=
    \left\{\begin{array}{l l}
 q^{2d'-2k_1-3}(q^2+q+1)\cdot \left(\mathcal{F}(d,d',k_1)+\varepsilon(\gamma)\right) &\textit{if $F_{\chi_\gamma}/F$ is unramified};\\
     q^{S(\gamma)-2k_1-d'-1}\cdot \left(\mathcal{F}(d,d',k_1)+\varepsilon(\gamma)\right) & \textit{if $F_{\chi_\gamma}/F$ is ramified},
    \end{array}\right.
    \]
    where $\mathcal{F}(d,d',k_1)=\left(\frac{d}{3}-k_1+1\right)q^{d'-\frac{d}{3}+1}-\left(\frac{d}{3}-k_1-1\right)q^{d'-\frac{d}{3}}+3q\cdot\frac{q^{d'-\frac{d}{3}}-1}{q-1}$. 
  For the relation between $d'$ and $S(\gamma)$, and the notion of $\varepsilon(\gamma)$, see Theorem \ref{mainthmgl3}.(3).

    \item In the case that $k_1+k_2+k_3=d$ and exactly two of $k_i$'s are equal, we have the following formulas:
    \begin{itemize}
    \item If $k_1<k_2=k_3$, then
    \[
    \mathcal{SO}_{\gamma,d\mu}(\mathbbm{1}_{D_{(k_1,k_2,k_3)}})=
    \left\{\begin{array}{l l}
     q^{S(\gamma)+k_2-k_1-d-2}(q^2+q+1)&\textit{if $F_{\chi_\gamma}/F$ is unramified};\\
     q^{S(\gamma)+k_2-k_1-d}& \textit{if $F_{\chi_\gamma}/F$ is ramified}.
    \end{array}\right.
    \]
    \item
    If  $k_1=k_2<k_3$, then
    \[
    \mathcal{SO}_{\gamma,d\mu}(\mathbbm{1}_{D_{(k_1,k_2,k_3)}})=
    \left\{\begin{array}{l l}
     q^{S(\gamma)-3k_1-2}(q^2+q+1)&\textit{if $F_{\chi_\gamma}/F$ is unramified};\\
     q^{S(\gamma)-3k_1}& \textit{if $F_{\chi_\gamma}/F$ is ramified}.
    \end{array}\right.
    \]
    \end{itemize}

         \item In the case that $k_1=k_2=k_3=d/3$, we have 
        \[
    \mathcal{SO}_{\gamma}(\mathbbm{1}_{D_{(k_1,k_2,k_3)}})=
    \left\{\begin{array}{l l}
    \frac{q^{3d'-d}(q^{2}+q+1)}{(q-1)^2}-\frac{3q^{2d'-\frac{2d}{3}}(q^2+q+1)}{(q+1)(q-1)^2}+\frac{3}{(q+1)(q-1)^2} &\textit{if $F_{\chi_\gamma}/F$ is unramified and $S(\gamma)=3d'$};\\
    \frac{q^{3d'-d+2}}{(q-1)^2}-\frac{q^{2d'-\frac{2d}{3}+1}(2q+1)}{(q+1)(q-1)^2}+\frac{1}{(q+1)(q-1)^2} & \textit{if $F_{\chi_\gamma}/F$ is ramified and $S(\gamma)=3d'$};\\
    \frac{q^{3d'-d+3}}{(q-1)^2}-\frac{q^{2d'-\frac{2d}{3}+2}(q+2)}{(q+1)(q-1)^2}+\frac{1}{(q+1)(q-1)^2} & \textit{if $F_{\chi_\gamma}/F$ is ramified and $S(\gamma)=3d'+1$}.
    \end{array}\right.
    \]
\end{enumerate}
\end{theorem}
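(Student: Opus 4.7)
The plan is to derive Theorem \ref{mainthmgl3q} from Theorem \ref{mainthmgl3} by a single application of the comparison formula in Proposition \ref{proptrans}, exactly as was done for $n=2$ in the proof of Theorem \ref{result_2_dmu}. Since $\gamma$ is elliptic, $\chi_{\gamma}(x)$ is irreducible, so the index set $B(\gamma)$ is a singleton and the product $\prod_{i\in B(\gamma)} |\Delta_{F_i/F}|^{-1/2}$ reduces to $|\Delta_{F_{\chi_{\gamma}}/F}|^{-1/2}$. By Proposition \ref{propserre} this combines with $|\Delta_{\gamma}|^{1/2}$ to give a clean factor of $q^{-S(\gamma)}$. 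With $\dim \mathrm{GL}_3 = 9$ and $\dim \mathrm{T}_{\gamma} = 3$, Proposition \ref{proptrans} therefore yields
\[
\mathcal{SO}_{\gamma, d\mu}(\mathbbm{1}_{D_{(k_1,k_2,k_3)}})
= q^{S(\gamma)} \cdot \frac{\#\mathrm{T}_{\gamma}(\kappa) \cdot q^{6}}{\#\mathrm{GL}_3(\kappa)} \cdot \mathcal{SO}_{\gamma}(\mathbbm{1}_{D_{(k_1,k_2,k_3)}}).
\]

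Next I would compute $\#\mathrm{T}_{\gamma}(\kappa)$ using the description in Section \ref{sectorus}, namely $\mathrm{T}_{\gamma} = \mathrm{Res}_{\mfo_{F_{\chi_{\gamma}}}/\mfo_F}\mathbb{G}_m$. When $F_{\chi_{\gamma}}/F$ is unramified the residue field has $q^3$ elements, giving $\#\mathrm{T}_{\gamma}(\kappa) = q^3 - 1$; when $F_{\chi_{\gamma}}/F$ is totally ramified one computes $|(\kappa[t]/(t^3))^{\times}| = q^{2}(q-1)$. Using $\#\mathrm{GL}_3(\kappa) = q^3 (q^3-1)(q^2-1)(q-1)$, the normalizing factor becomes
\[
\frac{\#\mathrm{T}_{\gamma}(\kappa) \cdot q^{6}}{\#\mathrm{GL}_3(\kappa)}
= \begin{cases}
\dfrac{q^3}{(q^2-1)(q-1)} & \textit{if } F_{\chi_{\gamma}}/F \textit{ is unramified};\\[1ex]
\dfrac{q^5}{(q^3-1)(q^2-1)} & \textit{if } F_{\chi_{\gamma}}/F \textit{ is ramified}.
\end{cases}
\]

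With the scalar in hand, the rest of the proof is simply multiplication by the five cases of Theorem \ref{mainthmgl3} and algebraic simplification. In the generic cases (2), (4) the factors $(q^3-1)(q^2-1)$ in the numerator of the geometric formula cancel neatly against the denominator of the scalar, producing the advertised $(q^2+q+1)$ factor in the unramified case and no such factor in the ramified case. In case (3), which encapsulates the hardest input from Theorem \ref{mainthmgl3}, the same cancellation occurs within the parenthesized expression, so the function $\mathcal{F}(d,d',k_1)$ and the $\varepsilon(\gamma)$ term are preserved unchanged; only the prefactor is rescaled. In case (5) the divisions must be carried out by hand, but no new ideas appear: the factor $q^{-3-d}$ becomes $q^{S(\gamma) + 6 - d}$ in the prefactor and one then substitutes $S(\gamma) = 3d'$ or $3d'+1$ as appropriate, using $1/((q-1)(q+1)(q-1)) = 1/((q^2-1)(q-1))$ to recognize the resulting denominators.

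The main obstacle is therefore not conceptual but computational: the arithmetic in case (5), and the need to be careful in case (3) that the $\varepsilon(\gamma)$ term and the geometric series $3q\cdot\frac{q^{d'-d/3}-1}{q-1}$ both survive the rescaling intact without producing extraneous factors of $q-1$ or $q+1$. All of these verifications are routine once the single multiplicative constant above is fixed, so I would organize the proof as: (i) record the formula displayed above via Propositions \ref{proptrans} and \ref{propserre}; (ii) record the two values of $\#\mathrm{T}_{\gamma}(\kappa)$; (iii) invoke Theorem \ref{mainthmgl3} case by case and simplify.
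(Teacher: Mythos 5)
Your proposal is correct and follows exactly the paper's route: apply Propositions \ref{proptrans} and \ref{propserre} to obtain the rescaling factor $q^{S(\gamma)}\cdot\frac{\#\mathrm{T}_\gamma(\kappa)\,q^6}{\#\mathrm{GL}_3(\kappa)}$, compute $\#\mathrm{T}_\gamma(\kappa)$ in the two ramification cases from Section \ref{sectorus}, and multiply through the five cases of Theorem \ref{mainthmgl3}. The paper compresses the final arithmetic to a single sentence, but your explicit simplification of the scalar and your case-by-case cancellation checks are precisely what that sentence conceals.
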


 \begin{proof}
    By Proposition \ref{proptrans} together with Proposition \ref{propserre}, we have 
    \[
    \mathcal{SO}_{\gamma,d\mu}(\mathbbm{1}_{D_{(k_1,k_2,k_3)}})=q^{S(\gamma)}\cdot \frac{\#\mathrm{T}_\gamma(\kappa)\cdot q^{9}}{\#\mathrm{GL}_3(\kappa)\cdot q^3}\cdot \mathcal{SO}_{\gamma}(\mathbbm{1}_{D_{(k_1,k_2,k_3)}}).
    \]
        By the description of $T_{\gamma}$ in Section \ref{sectorus}, we have
    \[
\#\mathrm{T}_\gamma(\kappa)=
        \left\{
        \begin{array}{l l}
 q^3-1 &\textit{if $F_{\chi_\gamma}/F$ is unramified};\\
      q^3-q^2 &\textit{if $F_{\chi_\gamma}/F$ is ramified}.
        \end{array}
        \right.
        \]
                Combination of these two completes the proof. 
\end{proof}

\bibliographystyle{alpha}
\bibliography{References}

\end{document}